
\documentclass[12pt]{amsart}

\usepackage{mathrsfs}
\usepackage{latexsym}
\usepackage{amssymb}
\usepackage{amsmath}
\usepackage{amsfonts}
\usepackage{amsthm}
\usepackage{multirow}
\usepackage{xcolor}
\usepackage{verbatim}
\usepackage{caption}
\usepackage{graphicx}
\usepackage{hyperref}
\usepackage{enumerate}
\usepackage{booktabs}

\oddsidemargin=0.4in
\evensidemargin=0.4in
\topmargin=-0.2in
\textwidth=15cm
\textheight=23.5cm
\parskip 2pt

\def\a{\alpha} \def\b{\beta}   
\def\t{\tau}  \def\o{\omega} 
\def\Ga{\Gamma}

\def\ZZ{\mathbb{Z}}    

 \def\lcm{{\rm lcm}} \def\char{{\sf \,char\,}}
  
\def\le{\leqslant} \def\leq{\leqslant} \def\ge{\geqslant} 
 
\def\l{\langle} \def\r{\rangle}


  \def\Cos{{\sf Cos}}


\def\calA{\mathcal{A}}                       
                \def\calF{\mathcal{F}}    
          \def\calH{\mathcal{H}}             
                 \def\calL{\mathcal{L}}    
\def\calM{\mathcal{M}}                     

\def\Aut{{\mathrm{Aut}}}

 \def\Hol{{\rm Hol}}  \def\Sym{{\rm Sym}}

\def\A{\mathrm{A}}    
\def\C{\mathrm{C}}    
\def\D{\mathrm{D}}    
\def\K{\mathrm{K}}    
\def\N{\mathrm{N}}    
\def\O{\mathrm{O}}    
\def\S{\mathrm{S}}    
\def\Z{\mathrm{Z}}    
\def\RM{\mathsf{RotaMap}}
\def\BiRM{\mathsf{BiRotaMap}}
\def\RevMap{\mathsf{RevMap}}
\def\BiRevMap{\mathsf{BiRevMap}}
\def\lv{\lvert}
\def\rv{\rvert}
\newtheorem{theorem}{Theorem}[section]
\newtheorem{lemma}[theorem]{Lemma}

\newtheorem{definition}[theorem]{Definition}
\newtheorem{example}[theorem]{Example}

\def\qed{{\hfill$\Box$\bigskip}\medbreak}

\setcounter{section}{0}

\begin{document}
	
	\title[]{Arc-transitive maps with edge number\\  coprime to the Euler characteristic -- I}
\thanks{This work was partially supported  by NSFC grant 11931005}

\author[Li]{Cai Heng Li}
\address{SUSTech International Center for Mathematics\\
Department of Mathematics\\
Southern University of Science and Technology\\
Shenzhen, Guangdong 518055\\
P. R. China}
\email{lich@sustech.edu.cn}

\author[Liu]{Luyi Liu}
\address{ShenZhen International Center for Mathematics\\
Southern University of Science and Technology\\
Shenzhen, Guangdong 518055\\
P. R. China}
\email{12031108@mail.sustech.edu.cn}

	\maketitle
	
\begin{abstract}
This is one of a series of papers which aim towards a classification of edge-transitive maps of which the Euler characteristic and the edge number are coprime.
This one establishes a framework and carries out the classification work for arc-transitive maps with solvable automorphism groups, which illustrates how the edge number impacts on the Euler characteristic for maps.
The classification is involved with the constructions of various new and interesting arc-regular maps.
	
\textit{Key words:} arc-regular, maps, Euler characteristic
		
\end{abstract}

\section{Introduction}
A {\it map} is a $2$-cell embedding of a graph into a closed surface.  
Throughout the paper, we denote by $\calM = (V, E, F)$ a map with vertex set $V$, edge set $E$, and face set $F$.  
The {\it underlying graph} $(V, E)$ of $\calM$ is denoted by $\Gamma$, and the supporting surface of $\calM$ is denoted by $\mathcal{S}$.  
The {\it Euler characteristic} of a map $\calM = (V, E, F)$ is  
\[
\chi(\calM) = |V| - |E| + |F|.
\]  
Clearly, the number of edges $|E|$ directly affects $\chi(\calM)$.  
In this paper, we investigate highly symmetric maps where $\gcd(|E|,\ \chi(\calM)) = 1$.
Moreover, we always assume that $\calM$ has no loops.

Let $\calM = (V, E, F)$ be a map.
An {\it arc} of $\calM$ is an ordered adjacent pair $(\a, e)$ of a vertex $v$ and an edge $e$, while a {\it flag} of $\calM$ is an adjacent triple $(\a, e, f)$ of a vertex $\a$, an edge $e$, and a face $f$.  
Let $e$ be an edge with endpoints $\a$ and $\b$.
By the definition of a map, there are exactly two faces $f$ and $f'$ adjacent to $e$. 
The edge $e$ is adjacent to two arcs $(\a, e)$ and $(\b, e)$, and four flags:  
$(\a, e, f)$, $(\a, e, f')$, $(\b, e, f)$, and $(\b, e, f')$.
The arc set and flag set of $\calM$ are denoted by $\calA$ and $\calF$, respectively, and satisfy $|\calF| = 2|\calA| = 4|E|$.  
We denote by $E(v)$ and $E(f)$ the sets of edges incident to vertex $\a$ and face $f$, respectively.  
The {\it vertex valency} of $\a$ is defined as $|E(\a)|$, and the {\it face length} of $f$ is defined as $|E(f)|$.

An {\it automorphism} of $\calM$ is a permutation $\sigma$ of its flags that preserves all incidence relations.  
The set of all automorphisms of $\calM$ forms a group, denoted by $\Aut(\calM)$.  
If an automorphism $\sigma \in \Aut(\calM)$ fixes a flag, then it must fix all flags, and therefore $\sigma$ is the identity element.  
This implies that $\Aut(\calM)$ acts semiregularly on the flag set $\calF$ of $\calM$.  
If $\Aut(\calM)$ acts transitively on $\calF$, then $\calM$ is called a {\it regular map}.  
If $\Aut(\calM)$ acts transitively on the arc set $\calA$ but intransitively on $\calF$, then it acts regularly on $\calA$, and $\calM$ is termed an {\it arc-regular map}.  
Similarly, $\calM$ is called {\it edge-regular} if $\Aut(\calM)$ acts regularly on the edge set $E$.  
The classification problem of certain arc-transitive maps has been extensively studied; see \cite{few-VF, birotarynp, regularnp, RotaMap, RevMap, howsymmetry} and references therein.

A map is called {\it locally finite} if all vertex valencies and face lengths are finite.  
Locally finite edge-transitive maps and their automorphism groups are classified by Graver and Watkins~\cite{etm14tp} into fourteen types based on the local actions of their automorphism groups.  
Among these types, five are arc-transitive which are explain as follows.  

Let $G \leq \Aut(\calM)$ act arc-transitively on $\calM$.  
Then $G$ is transitive on the vertex set $V$, and the valency $\ell = |E(\a)|$ is constant for all $\a \in V$, called the {\it valency} of $\calM$.  
Furthermore, there exists an involution $z \in G$ that swaps the two arcs $(\a, e)$ and $(\b, e)$ adjacent to an edge $e \in E$.

First, suppose that $G$ acts transitively on the flag set of $\calM$, then $G = \Aut(\calM)$, and $\calM$ is a regular map.  
Furthermore, there exist involutions $x, y \in G$ such that  
\[
G = \l x, y, z \r, \quad G_v = \l x, y \r = \D_{2\ell}, \quad G_e = \l x, z \r = \D_4.
\]  
The triple $(x, y, z)$ is called a {\it regular triple} of $G$.  

Now assume that $G$ is not transitive on the flag set.
Then $G$ is regular on the arc set of $\calM$, and $\calM$ is a  $G$-arc-regular map.
In this case, $G_v$ is cyclic or dihedral of order equal to $\ell$.
If $G_\a=\l a\r=\ZZ_\ell$, then $\calM$ is said to be {\it $G$-vertex-rotary}, and $(a,z)$ is called a {\it rotary pair} for $G$.
On the other hand, if $G_\a=\l x,y\r=\D_\ell$, then $\calM$ is said to be {\it $G$-vertex-reversing}, and $(x,y,z)$ is called a {\it reversing triple} for $G$.
Further, according to the action of $z$ on $\{f,f'\}$, vertex-rotary maps are divided into {\it rotary maps} and {\it bi-rotary maps}, and vertex-reversing maps are divided into {\it reversing maps} and {\it bi-reversing maps}, as shown in the table below.

\begin{table}[h]
	\caption{Five types of arc-transitive maps}\label{tab}
	\centering
	\begin{tabular}{cccc}
		\toprule[0.01in]
		Type & $G_\a$ & $(f,f')^z$ &Name\\ 
		\midrule[0.01in]
		$1$ & $\l x,y\r=\D_{2k}$ & $(f,f')\text{ or }(f',f)$&regular\\ 
		$2^*$ & $\l x,y\r=\D_k$ & $(f,f')$&reversing\\
		$2^{{\rm P}}$ &$\l x,y\r=\D_k$ & $(f',f)$ &bi-reversing\\ 
		$2^*$ex & $\l a\r=\ZZ_k$ & $(f',f)$ & rotary (orientably regular)\\
		$2^{{\rm P}}\rm{ex}$ & $\l a\r=\ZZ_k $& $(f,f')$& bi-rotary\\
		\bottomrule[0.01in]
	\end{tabular}
	
\end{table}

We classify arc-transitive maps $\calM=(V,E,F)$ with $\gcd(|E|,\chi(\calM))=1$ in two papers.
This one presents a classification with $\Aut(\calM)$ solvable, and the second one~\cite{non-solvable} completes the work for the case with $\Aut(\calM)$ nonsolvable.

Noticing that arc-transitive embeddings of multicycles are determined in \cite{GuoLiu}, and arc-transitive embeddings of complete multigraphs are classified in~\cite{RevMap}.
For the orientably-regular embeddings, see~\cite{Siran-complete}.
The main result of this paper is stated as follows.

\begin{theorem}\label{thm:main}
Let $\calM=(V,E,F)$ be a map with $\gcd(\chi(\calM),|E|)=1$.
Assume that $G\leq\Aut(\calM)$ is indecomposable, solvable and arc-transitive on $\calM$, and the underlying graph of $\calM$ is neither $\K_4^{(\lambda)}$ nor $\C_n^{(\lambda)}$ with $n\ge2$.
Then one of the following holds:
\begin{enumerate}[{\rm(1)}]

\item $G=\l g\r{:}\l h\r$, where $\C_{\l h\r}(g)\leqslant\Phi(\l h\r)$, $|h|=2m$ and $g^{h^m}=g^{-1}$; the map $\calM$ is $G$-vertex-rotary with a rotary pair $(a,z)$ is equivalent to $(gh^i,h^m)$ or $(gh^{2i},h^m)$, where $\gcd(i,|h|)=1$;
the number of inequivalent rotary pairs for $G$ is 
$\frac{2\varphi(|h|)}{|\tau_{h_0}|}$ if $G_2\cong\D_{2^e}$ 
with $e \geqslant 3$;
and $\varphi(|h|)/|\tau_{h_0}|$ otherwise, where $\l h_0 \r = \C_{\l h \r}(g)$. 
\vskip0.1in

\item[] {\rm In the following, Sylow $2$-subgroups of $G$ are dihedral.
Let $u$, $v$ be two involutions such that $\l u,v\r=G_2$ and $ w=uv$. \vskip0.1in}

\item $G=(\l g\r\times\l g_v\r){:}(\l u,v\r\times\l h\r)$, where $|g|,|g_v|$ and $|h|$ are pairwise coprime, $\C_{\l h\r}(gg_v)\leqslant \Phi(\l h\r)$, and $\l u, v\r=\D_{2^e}$ with $e\geqslant 2$ such that \[(g,g_v)^{v}=(g,g_v^{-1}) \text{ and } (g,g_v)^{u}=(g,g_v),\]
where  $g_v\neq 1$; and either
\begin{enumerate}[{\rm(i)}] 
	\item $\calM$ is $G$-vertex-reversing, $G=\l g_v\r{:}\l u,v\r$, relabeling if necessary, a reversing triple $(x,y,z)$ is equivalent to 
	\[(u,g_vw^{2j}v,v)\text{ , or }(v,g_vw,u)\text{ when } |w|=2,\] and there are $6\cdot 2^{e-2}$ inequivalent reversing triples of $G$; or\vskip0.1in
	
	\item $\calM$ is $G$-vertex-rotary, and a rotary pair $(a,z)$ is equivalent to  
	\[(gg_vwh^j,v) \text{ or }(gg_vvh^j,v),\]
	where $\gcd(j,|h|)=1$, and there are $2\varphi(|h|)/|\t_{h_0}|$ inequivalent rotary pairs of $G$.
\end{enumerate}
\vskip0.1in

\item  $G=(\l g_u\r\times\l g_v\r\times\l g\r){:}(\l u,v\r\times\l h\r)$, where $|g_u|,|g_v|,|g|$ and $|h|$ are pairwise coprime, $\C_{\l h\r}(g_ug_vg)\leqslant \Phi(\l h\r)$, and $\l u, v\r=\D_{2^e}$ with $e\geqslant 2$ such that
\[\mbox{$(g_u,g_v, g)^v=(g_u,g_v^{-1},g^{-1})$ and $(g_u,g_v,g)^u=(g_u^{-1},g_v,g^{-1})$,}\]
where if $g_u=1$, then $g_v\not=1$ and $g\not=1$;
suppose that $g_v=1$, then either  
\begin{enumerate}
	\item[{\rm (1)}] $\calM$ is $G$-vertex-reversing, $h=1$, and relabeling if necessary, a reversing triple $(x,y,z)$ is equivalent to 
	\[(g_u g^{k_1} w^{2i} u,\, g^{k_2} v,\, u),\]
	where $\gcd(k_1, k_2) = 1$, $\ell_1$ is odd, and there are
	\[
	6\cdot 2^{e-3} \cdot \frac{|\{(k_1, k_2) \mid \gcd(k_1, k_2) = 1\}|}{\varphi(|g|)}
	\]
	inequivalent reversing triples of $G$;\vskip0.1in
	
	\item[{\rm (2)}] $\calM$ is $G$-vertex-rotary, and a rotary pair $(a,z)$ is equivalent to
	\[(g_u g w h^\ell, u)\text{ or }(g_u g w^{2} v h^\ell, u),\]
	and there are $2 \varphi(|h|)/|\tau_{h_0}|$ inequivalent rotary pairs in $G$.
\end{enumerate}

\vskip0.1in

\item $G=(\l g\r\times\l w^2,v\r){:}((\l h\r{:}\l u\r)\times\l c\r)$, where $\l u,v\r=\D_8$ such that $h^u=h^{-1}$ and $g^{u}=g^{-1}$, $\l h\r$ is a Sylow $3$-subgroup such that $(w^2v,v,w^2)^h=(v,w^2,w^2v)$ and $[g,h]=1$ and  $\l g\r$ and $\l c\r$ are Hall subgroups such that $\C_{\l c\r}(g)\leqslant\Phi(\l c\r)$;
and either

\begin{enumerate}[{\rm(i)}]
	
\item $\calM$ is $G$-vertex-rotary with a rotary pair $(a,z)$ being equivalent to 
\[(g t h^j c^k, u)\text{ or }(g t h^j u c^k, u),\] 
where  $\gcd(j,3)=\gcd(k,|c|)=1$, and $t\in \{v, w^2\}$,
and there are 
\[\varphi(|h|) \varphi(|c|) / 2 |\t_{h^3}| |\t_{c_0}|\]
inequivalent rotary pairs of $G$.

\item $\calM$ is $G$-vertex-reversing, $c=1$;
let $t\, t'\in\l w^2,v\r$,  relabeling if necessary, a reversing triple $(x,y,z)$ is equivalent to one of the following triples:

\begin{enumerate}
	\item $(t (gh)^i u, t', u)$ with $t'\neq 1$,\vskip0.1in
	\item $(t (gh)^i u, g^{i'} h^{3j_0} w^2u, u)$,\vskip0.1in
	\item $(t (gh)^i u, g^{i'} t' h^{j'} u, u)$ with $(t, t')\neq (1, 1)$,
\end{enumerate}
\vskip0.1in
where $\gcd(i,|gh|)=1$.

\end{enumerate} 

\end{enumerate}

\end{theorem}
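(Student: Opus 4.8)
\textit{Proof proposal.} The plan is first to translate the hypothesis $\gcd(|E|,\chi(\calM))=1$ into a strong restriction on the Sylow subgroups of $G$, then to use solvability to read off the possible isomorphism types of $G$, and finally, for each type, to enumerate the generating tuples presenting $\calM$. I would begin by recording that, since $G$ is arc-transitive, every vertex has the same valency $\ell=|G_v|$ and every face the same length $m=|G_f|$, so $|V|=2|E|/\ell$ and $|F|=2|E|/m$, whence Euler's formula becomes
\[
\chi(\calM)=\frac{2|E|}{\ell}-|E|+\frac{2|E|}{m}=|E|\cdot\frac{2\ell+2m-\ell m}{\ell m}.
\]
I may assume that $\calM$ is $G$-arc-regular, so that $|G|=2|E|$ and $\calM$ is presented by a rotary pair $(a,z)$ with $\l a,z\r=G$, $|a|=\ell$, $z^2=1$, or by a reversing triple $(x,y,z)$ with $\l x,y\r=\D_\ell$. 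Let $p\mid|G|$ be a prime; then $p\mid|E|$, so coprimality forces $v_p(\chi)=0$. Using $\ell\mid 2|E|$ and $m\mid 2|E|$ and comparing the $p$-adic valuations of $|E|,\ell,m,2\ell,2m,\ell m$ in the displayed identity --- a short case analysis on whether $v_p(\ell)$ is larger than, equal to, or smaller than $v_p(m)$, treated separately for $p$ odd and $p=2$ --- I expect to rule out every possibility except $v_p(\ell)=v_p(|G|)$ or $v_p(m)=v_p(|G|)$. Consequently each Sylow subgroup of $G$ lies in $G_v$ or in $G_f$; since $G_v$ and $G_f$ are cyclic or dihedral, every odd Sylow subgroup of $G$ is cyclic, and a Sylow $2$-subgroup $G_2$ is cyclic or dihedral.

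Next I would determine $G$ itself. It is now solvable with cyclic odd Sylow subgroups. If $G_2$ is cyclic, then $G$ is a Z-group, hence metacyclic $\l g\r{:}\l h\r$; arc-transitivity supplies the distinguished involution $z\in\l h\r$ with $g^z=g^{-1}$, and the requirement that $(a,z)$ generate $G$ becomes exactly the Frattini condition $\C_{\l h\r}(g)\le\Phi(\l h\r)$ --- this is case (1). If $G_2$ is dihedral, then by the standard structure theorem for groups with dihedral Sylow $2$-subgroups (solvable case) $G$ has a normal Hall $2'$-subgroup $O$ with $G/O$ a dihedral $2$-group, $\cong\S_4$, or $\cong\A_4$; the $\A_4$ branch requires only a short separate treatment (it either fails coprimality or yields the excluded graph $\K_4^{(\lambda)}$). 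Here $O$ is metacyclic, say $O=A{:}\l h\r$ with $A$ a direct product of cyclic Hall subgroups of pairwise coprime order; since the automorphism group of an odd cyclic group is abelian, each involution class in $G_2=\l u,v\r$ acts on each factor of $A$ by $\pm1$ while $G_2$ centralises $\l h\r$, so $A$ breaks up according to the pieces inverted by $v$ only, by $u$ only, or by both, and this produces the shapes (2) and (3), with the conditions $\C_{\l h\r}(\,\cdot\,)\le\Phi(\l h\r)$ again enforcing joint generation. When $G/O\cong\S_4$ one has $G_2\cong\D_8$ with the Sylow $3$-subgroup acting nontrivially on it, so that a Klein four subgroup $\l w^2,v\r$ is cyclically permuted by an element of order $3$; a more hands-on analysis then gives case (4) together with its extra commuting Hall factors $\l g\r$ and $\l c\r$.

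Finally, for each of the groups $G$ in (1)--(4) I would classify, up to equivalence of maps (that is, up to the action of $\Aut(G)$, together with $a\mapsto a^{-1}$ in the rotary case), the rotary pairs and reversing triples generating $G$: first normalise an arbitrary such tuple to one of the stated forms by conjugating the cyclic generator into the standard complement and adjusting the involution, then compute the relevant $\Aut(G)$-orbit, whose point stabiliser is governed by the automorphisms of $\l h\r$ fixing $\l h_0\r=\C_{\l h\r}(g)$ --- that is, by $\tau_{h_0}$ --- leading to counts $\varphi(|h|)/|\tau_{h_0}|$, doubled exactly when $G_2\cong\D_{2^e}$ with $e\ge3$ because of an additional outer symmetry available there. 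Along the way the tuples whose underlying graph is $\K_4^{(\lambda)}$ or $\C_n^{(\lambda)}$ turn out to be precisely the excluded cases, already handled in \cite{RevMap, GuoLiu}.

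The hard part will be the dihedral-Sylow analysis: controlling how the $2$-group is amalgamated with the metacyclic odd core $O$, and, especially, isolating and completely describing the exceptional $\D_8/\S_4$ configuration of case (4); and then the orbit computations of the last step, where the interplay of $\Aut(G)$ with the Frattini conditions and with the parity of $e$ is what generates the various enumeration formulas. By contrast the arithmetic of the first step and the Z-group case (1) should be essentially routine.
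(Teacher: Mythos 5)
Your overall strategy mirrors the paper's (restrict the Sylow subgroups of $G$ via the coprimality hypothesis, classify the resulting solvable groups, then enumerate generating tuples), but two genuine gaps remain. First, your opening identity $\chi(\calM)=|E|\,(2\ell+2m-\ell m)/(\ell m)$ presupposes that all faces have the same length $m$. For a $G$-vertex-reversing map the face set is the union of two $G$-orbits $[G{:}\l x,z\r]$ and $[G{:}\l y,z\r]$, whose face lengths need not coincide, so the identity and the ensuing valuation analysis break down in precisely one of the cases you must treat. The paper avoids this entirely: since $\Aut(\calM)$ is semiregular on flags, $|G|$ divides $4|E|$; and for a prime $p\mid|G|$ with $p\nmid\chi$, if $p$ divided every $G$-orbit length on $V\cup E\cup F$ it would divide $\chi=|V|-|E|+|F|$, so some $\omega\in V\cup E\cup F$ satisfies $p\nmid |G|/|G_\omega|$ and the Sylow $p$-subgroup embeds in the cyclic or dihedral group $G_\omega$. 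You should replace your arithmetic with this orbit-counting argument (note also that, even granting constant face length, your conclusion that $v_p(\ell)=v_p(|G|)$ or $v_p(m)=v_p(|G|)$ needs the edge orbit to handle $p=2$).

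Second, and more seriously, the actual content of the theorem is the explicit lists of rotary pairs and reversing triples and the counts of their $\Aut(G)$-orbits in each of cases (1)--(4), and your proposal compresses all of this into one sentence (``first normalise \dots then compute the relevant $\Aut(G)$-orbit''). This is where essentially all of the paper's work lies (its Sections 5--7): for each type one must decide which candidate tuples actually generate $G$ (for instance, verifying $\l a,z\r=G$ is what forces $g_v=1$ or $g_u=1$ in type (III), and type (V) requires the separate computation that $\l th,u\r=\l w^2,v\r{:}\l h,u\r$ if and only if $t\in\{v,w^2\}$), then determine when two tuples are equivalent using an explicit description of $\Aut(G)$ (the paper imports this from a companion classification of automorphism groups of solvable almost Sylow-cyclic groups), and finally derive the counting formulas such as $2\varphi(|h|)/|\tau_{h_0}|$ and $6\cdot 2^{e-2}$. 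Your outline names the right ingredients but supplies none of these verifications, so as written it is a plan rather than a proof.
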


\section{Preliminaries}

In this section, we collect some basic definitions and preliminary results which will be used in the ensuing sections.

Let $\calM=(V,E,F)$ be a map, and $G\le\Aut(\calM)$ be arc-regular on $\calM$.
Then the stabilizer $G_e$ of an edge $e$ is generated by an involution $z$ say, explicitly,
\[G_e=\l z\r \cong \ZZ_2,\]
and $(\a,e)^z=(\b,e)$.
Let $f,f'\in F$ be incident with $e$.
Then $z$ either fixes $f$ and $f'$, or interchanges $f$ and $f'$, so that
\[z:\ (f,f')\mapsto(f,f')\ \mbox{or}\ (f',f).\]

Since $G$ acts regularly on the arc set, the vertex stabilizer $G_\a$ is regular on $E(\a)$.  
The pair $(G_\a, z)$ defines an arc-regular coset graph $\Gamma = (V, E)$, where  
\[
V = [G : G_\a] = \{ G_\a g \mid g \in G \}, \quad 
E = [G : \langle z \rangle] = \{ \langle z \rangle h \mid h \in G \}
\]  
with incidence defined by non-empty intersection: a vertex $G_\alpha g$ and an edge $\langle z \rangle h$ are incident if and only if $G_\alpha g \cap \langle z \rangle h \neq \emptyset$.  
This coset graph, denoted by $\Cos(G, G_\alpha, \langle z \rangle)$, may have multiple edges; see~\cite{RotaMap} for details.

Assume that $G_\a=\l a\r$.
Then the rotary pair $(a,z)$ determines two vertex-rotary embeddings of $\Ga$, namely, a rotary map or a bi-rotary map, defined below.

\begin{definition}\label{cons-rotary}
Let $G$ be a group generated by $a,z$ with $z$ being an involution, and
let $\Ga=\Cos(G,\l a\r,\l z\r)$.
Let $V=[G:G_\a]$, $E=[G:\l z\r]$, and let
\begin{itemize}
\item[(i)] $\RM(G,a,z)$ be the embedding of $\Ga$ with face set $[G:\l az\r]$, and
\item[(ii)] $\BiRM(G,a,z)$ be the embedding of $\Ga$ with face set $[G:\l z,z^a\r]$,
\end{itemize}
with incidences defined by non-empty intersection.
Then $\RM(G,a,z)$ is a rotary map, and $\BiRM(G,a,z)$ is a bi-rotary map.
\end{definition}

Now let $G_\a=\l x,y\r$ with $x$ and $y$ being involutions.
Then the reversing triple $(x,y,z)$ determines two vertex-reversing embeddings of $\Ga$, namely, a reversing map or a bi-reversing map, defined below.

\begin{definition}\label{cons-reversing}
Let $G$ be a group generated by involutions $x,y,z$, and
let $\Ga=\Cos(G,\l x,y\r,\l z\r)$.
Let
\begin{itemize}
\item[(i)] $\RevMap(G,x,y,z)$ be the embedding of $\Ga$ with $F=[G{:}\l x,z\r]\cup[G{:}\l y,z\r]$,
\item[(ii)] $\BiRevMap(G,x,y,z)$ be the embedding of $\Ga$ with $F=[G{:}\l x,y^z\r]$,
\end{itemize}
with incidence defined by non-empty intersection.
Then $\RevMap(G,x,y,z)$ is a reversing map, and $\BiRevMap(G,x,y,z)$ is a bi-reversing map.
\end{definition}

To end of this section, we provide some definitions for symbols used in this paper.
For an integer $n$, let $n_p$ be the $p$-part, which means that $n=n_pm$ such that $n_p$ is a $p$-power and $\gcd(p,m)=1$.
Let $\pi(n)$ be the set of prime divisors of the integer $n$.

For a group $G$, let $\pi(G)=\pi(|G|)$, the set of prime divisors of the order $|G|$.
For a subset $\pi$ of $\pi(G)$, denote by $G_\pi$ a Hall $\pi$-subgroup, and $G_{\pi'}$ a Hall $\pi'$-subgroup of $G$, where $\pi'=\pi(G)\setminus\pi$.
If $\pi=\{p\}$, then $G_\pi=G_p$ is a Sylow $p$-subgroup of $G$.
For a subgroup $H\leqslant G$, let $\N_G(H)$ and $\C_G(H)$ be the normaliser and the centralizer of $H$ in $G$, respectively.
Let $\Phi(G)$ be the Frattini subgroup of $G$, which consists of all non-generators of $G$.
By $\ZZ_n$, we mean a cyclic group of order $n$, and $\D_{2n}$ is a dihedral group of order $2n$.

\section{The edge number and the Euler characteristic}\label{Sec:|E|}

In this section, we investigate the effect of the Euler characteristic on edge numbers and automorphism groups of maps.
Also, we provided a characterization of automorphism groups of maps such that the Euler characteristic and the edge number are coprime.

\begin{lemma}\label{lem:E-chi}
Assume that $\calM$ has constant vertex valency $\ell$ and constant face length $k$, and $\gcd(\chi(\calM),|E|)=1$.
Then $|E|$ divides $k\ell$, $\lv V \rv$ divides $2k$ and $|F|$ divides $2\ell$.
\end{lemma}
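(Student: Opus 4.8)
The plan is to exploit the standard counting relations for a map together with the hypothesis that $\gcd(\chi(\calM),|E|)=1$. First I would record the two incidence-counting identities coming from constant valency and constant face length: counting incident (vertex, edge) pairs gives $\ell\,|V| = 2|E|$, and counting incident (edge, face) pairs gives $k\,|F| = 2|E|$. Substituting these into the Euler formula $\chi = |V| - |E| + |F|$ and clearing denominators yields
\[
k\ell\,\chi(\calM) = 2k|E| - k\ell|E| + 2\ell|E| = |E|\,(2k + 2\ell - k\ell).
\]
Thus $|E|$ divides $k\ell\,\chi(\calM)$. Since $\gcd(|E|,\chi(\calM))=1$, it follows that $|E| \mid k\ell$, which is the first assertion.

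For the second and third assertions I would feed this divisibility back into the two incidence identities. From $\ell\,|V| = 2|E|$ we get $|V| = 2|E|/\ell$, and since $|E|$ divides $k\ell$ we may write $k\ell = |E|\,t$ for some positive integer $t$; then $|V| = 2|E|/\ell$ and $2k = 2|E|t/\ell = |V|\,t$, so $|V|$ divides $2k$. Symmetrically, from $k\,|F| = 2|E|$ we get $|F| = 2|E|/k$ and $2\ell = 2|E|t/k = |F|\,t$, so $|F|$ divides $2\ell$. This completes the proof.

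The only mildly delicate point — and the one I would state carefully — is the passage from "$|E| \mid k\ell\,\chi(\calM)$ and $\gcd(|E|,\chi(\calM))=1$" to "$|E| \mid k\ell$"; this is just the elementary fact that if $n \mid ab$ and $\gcd(n,b)=1$ then $n \mid a$, applied with $n = |E|$, $a = k\ell$, $b = \chi(\calM)$. One should also remark that $\chi(\calM)$ may be negative or zero in general, but the hypothesis $\gcd(|E|,\chi(\calM))=1$ forces $\chi(\calM)\neq 0$ (as $|E|\geq 1$), so the divisibility argument is not vacuous; when $\chi(\calM)<0$ one simply works with $|\chi(\calM)|$ throughout. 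No genuine obstacle is expected here: the result is a direct consequence of Euler's formula and the handshake-type identities, and the coprimality hypothesis does exactly the work of cancelling the $\chi$ factor.
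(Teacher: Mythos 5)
Your proposal is correct and follows essentially the same route as the paper: the two incidence identities $\ell|V|=2|E|$ and $k|F|=2|E|$ are substituted into Euler's formula to obtain $k\ell\,\chi(\calM)=(2k+2\ell-k\ell)|E|$, coprimality then gives $|E|\mid k\ell$, and the divisibility of $|V|$ and $|F|$ follows by feeding this back into the incidence identities. Your additional remarks on the sign of $\chi(\calM)$ are a harmless refinement of the same argument.
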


\begin{proof}
Since each edge is adjacent to two vertices and two faces, there are
\[
|V| \cdot \ell = 2|E|\text{ and }|F| \cdot k = 2|E|.
\]   
Combining these with the Euler characteristic formula $\chi(\calM) = |V| - |E| + |F|$, we substitute  
\[
|V| = \frac{2|E|}{\ell}, \quad |F| = \frac{2|E|}{k} 
\]  
to obtain  
\[
k\ell \chi(\calM) = (2\ell - k\ell + 2k)|E|.
\]  
It follows from $\gcd(\chi(\calM), |E|) = 1$ that $|E|$ divides $k\ell$. 
Consequently, $|V| = \frac{2|E|}{\ell}$ divides $\frac{2k\ell}{\ell} = 2k$, and $|F| = \frac{2|E|}{k}$ divides $\frac{2k\ell}{k} = 2\ell$.
\end{proof}
		
This lemma shows that, if $\gcd(\chi,|E|)=1$ then the edge number $|E|$ is small relative to the valency and the face length.

The next lemma characterizes Sylow subgroups of the automorphism group of a map whose Euler characteristic is coprime to the number of edges.
	
\begin{lemma}\label{(chi,|A|)=1}
	Let $\calM=(V,E,F)$ be a map, and $G$ be a subgroup of $\Aut(\calM)$.
	Assume that $\gcd(\chi(\calM),|E|)=1$.
	Then the following statements are true:
	\begin{enumerate}[{\rm(1)}]
		\item $\gcd(\chi(\calM),|G|)$ divides $4$, and $\gcd(\chi(\calM),|G|)\neq 1$ only if the edge number $|E|$ is odd;\vskip0.1in
		\item each Sylow subgroup of $G$ is a cyclic or dihedral subgroup;\vskip0.1in
		\item $|G|=\lcm\{|G_\o|: \o\in V\cup E\cup F\}$.
	\end{enumerate}
\end{lemma}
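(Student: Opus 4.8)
\textbf{Proof proposal for Lemma \ref{(chi,|A|)=1}.}

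The plan is to combine the numerical identity from Lemma \ref{lem:E-chi} with the orbit-counting / incidence structure of a map, and then invoke a classical group-theoretic fact: a finite group all of whose involutions lie in dihedral (or cyclic) point stabilizers acting on a map has restricted Sylow structure. First I would reduce everything to the setting where $G$ is transitive on at least one of $V$, $E$, $F$, since in general $G$ need not be; but the key point is that for \emph{any} $\omega\in V\cup E\cup F$, the stabilizer $G_\omega$ is a subgroup of the full map automorphism group, whose action near $\omega$ is well understood. Concretely, $G_e$ for an edge $e$ embeds into $\D_4$ (the four flags on $e$), $G_v$ for a vertex $v$ embeds into $\D_{2\ell_v}$ (where $\ell_v$ is the valency of $v$), and $G_f$ for a face $f$ embeds into $\D_{2k_f}$ (where $k_f$ is the length of $f$), because the stabilizer of a flag is trivial and these are the local permutation groups on flags incident to $\omega$. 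This is the structural backbone of all three parts.

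For part (3), the statement $|G|=\lcm\{|G_\omega|:\omega\in V\cup E\cup F\}$: I would first note that the right-hand side clearly divides $|G|$. For the reverse, take a prime $p$ dividing $|G|$ and a Sylow $p$-subgroup $P$. Using $|V|\cdot\ell=|F|\cdot k=2|E|$ together with Lemma \ref{lem:E-chi} (so $|E|\mid k\ell$, $|V|\mid 2k$, $|F|\mid 2\ell$), one shows that for odd $p$, the $p$-part of $|G|$ divides the $p$-part of one of $|V|$, $|E|$, $|F|$ \emph{times} the corresponding orbit stabilizer, and then a counting argument on the $G$-action on flags ($|\calF|=4|E|$) pins the $p$-part of $|G|$ to divide $|G_\omega|$ for a suitable $\omega$. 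The cleanest route is: $|G|$ divides $|\calF|\cdot|G_\phi|=4|E|$ for a flag $\phi$ (trivial stabilizer, but $G$ may be intransitive), so more carefully $|G|$ divides $4|E|\cdot$(number of flag orbits is not bounded)... — so instead I would argue orbit-by-orbit: on each $G$-orbit $\Omega$ in $V$, $|G|=|\Omega|\,|G_v|$ with $|\Omega|\mid |V|\mid 2k$, hence $|G|_p \mid (2k)_p\,|G_v|_p$; playing this against the face orbits ($|G|_p\mid(2\ell)_p|G_f|_p$) and the coprimality $\gcd(\chi,|E|)=1$ forces $k_p$ or $\ell_p$ to be absorbed, yielding $|G|_p=|G_\omega|_p$ for some $\omega$. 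Taking the lcm over all primes gives (3).

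For part (2), each Sylow $p$-subgroup $P$ of $G$: by part (3) just proved, $|P|=|P\cap G_\omega|$ up to conjugacy for some $\omega$, i.e.\ (a conjugate of) $P$ lies inside some vertex-, edge-, or face-stabilizer, each of which is a subgroup of a dihedral group $\D_{2m}$; and every subgroup of a dihedral group is cyclic or dihedral. For part (1): the identity $k\ell\,\chi(\calM)=(2\ell-k\ell+2k)|E|$ from the proof of Lemma \ref{lem:E-chi} gives $k\ell\chi \equiv 2(k+\ell)\,|E|\pmod{k\ell|E|}$-type control; since $|E|\mid k\ell$, write $k\ell=|E|\cdot m$, so $\chi\mid 2(k+\ell)/(\text{something})$ — more usefully, $\gcd(\chi,|G|)$ divides $\gcd(\chi,k\ell)$ only through small factors, and one checks directly from $k\ell\chi=(2\ell-k\ell+2k)|E|$ that any odd prime $p\mid\gcd(\chi,|G|)$ would divide $2\ell+2k=2(k+\ell)$ after cancelling, which combined with $p\mid\chi$ and the valency/face-length divisibility forces $p\mid|E|$, contradiction; the $2$-part is bounded by $4$ because $|E|$ even forces $k,\ell$ structure making $\chi$ odd — so $2\mid\chi$ (equivalently $4\mid\chi$, and $\gcd\le 4$) only when $|E|$ is odd, and then $k\ell\chi=(2\ell-k\ell+2k)|E|$ with $|E|$ odd shows $\chi_2\mid (2\ell+2k-k\ell)_2$, which is at most $4$. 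The main obstacle I anticipate is handling the prime $2$ cleanly and the case where $G$ is not transitive on $V$, $E$, or $F$ — there one must run the orbit-stabilizer bookkeeping on every orbit simultaneously and verify the lcm statement without a single global transitive action to lean on; the coprimality hypothesis $\gcd(\chi,|E|)=1$ is exactly what makes the $p$-parts line up, so the argument must invoke it at the right moment for each prime.
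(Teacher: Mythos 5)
Your proposal has a genuine gap, and it stems from reaching for the wrong tool. You lean throughout on Lemma~\ref{lem:E-chi} (the divisibilities $|E|\mid k\ell$, $|V|\mid 2k$, $|F|\mid 2\ell$ and the identity $k\ell\chi=(2\ell-k\ell+2k)|E|$), but that lemma assumes \emph{constant} vertex valency and face length, whereas Lemma~\ref{(chi,|A|)=1} is stated for an arbitrary map and an arbitrary subgroup $G\leq\Aut(\calM)$ with no transitivity or homogeneity hypothesis. So the numerical identity you want to ``cancel'' primes against is simply not available, and your parts (1) and (3) do not get off the ground. What you are missing are two much simpler observations that the paper's proof is built on. First, $\Aut(\calM)$ is semiregular on the flag set, so \emph{all} $G$-orbits on $\calF$ have length $|G|$ and hence $|G|$ divides $|\calF|=4|E|$; part (1) is then immediate from $\gcd(\chi,|G|)\mid\gcd(\chi,4|E|)=\gcd(\chi,4)$. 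Second, the alternating-sum trick: if a prime $p$ divided the length of \emph{every} $G$-orbit on $V\cup E\cup F$, then $p$ would divide $|V|-|E|+|F|=\chi$; so for $p\nmid\chi$ there is some $\omega$ with $p\nmid |G|/|G_\omega|$, and a Sylow $p$-subgroup of $G$ is (up to conjugacy) contained in $G_\omega$, which is cyclic or dihedral. This is the engine for parts (2) and (3), and your sketch never articulates it; your substitute (``playing the vertex orbits against the face orbits forces $k_p$ or $\ell_p$ to be absorbed'') is not an argument.

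There are also local errors: an individual $G$-orbit length on $V$ need not divide $|V|$ (orbit lengths divide $|G|$ and \emph{sum} to $|V|$), so ``$|\Omega|\mid|V|\mid 2k$'' fails twice over; and your worry that intransitivity on flags blocks $|G|\mid 4|E|$ is misplaced, since trivial flag stabilizers force all flag orbits to have equal length $|G|$. Your idea of deducing (2) from (3) is legitimate in principle, but (3) is exactly where the paper has to work hardest: after the alternating-sum argument one sets $d=\gcd\{|G|/|G_\omega|\}$, notes $d$ divides $\gcd(\chi,|G|)\mid 4$, and rules out $d=2,4$ by showing $|G|=4|E|$ forces an edge stabilizer $\D_4$ (so $4\mid\lcm\{|G_\omega|\}$) while $|G|=2|E|$ with all $|G_\omega|$ odd would make $|E|$ a sum of even orbit lengths, contradicting $|E|$ odd. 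None of this final case analysis appears in your proposal, so part (3) remains unproved even granting your other steps.
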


\begin{proof}

Recall that $\Aut(\calM)$ acts semiregular on the flag set $\calF$.
So  $|G|$ divides $|\calF|$.
Let $\chi=\chi(\calM)$ for convenience.
Since  $|\calF|=4|E|$ and $\gcd(\chi,|E|)=1$, $\gcd(\chi,|G|)$ divides $\gcd(\chi,|\calF|)=\gcd(\chi,4|E|)=\gcd(\chi,4)$.
Suppose that $\gcd(\chi,|G|)\neq 1$.
Then $\chi$ is even, and so the edge number $|E|$ is odd, as in part~(1).

Let $p$ be a prime divisor of $|G|$.  
First, suppose that $p \mid \chi$.  
It follows from statement~(1) that $p = 2$ and $|E|$ is odd.
Since $|G|$ divides $4|E|$, the Sylow $2$-subgroups of $G$ have order dividing $4$.  
Explicitly, Sylow $2$-subgroups are cyclic or dihedral. 

Now, suppose that $p \nmid \chi$.  
We claim that exists $\omega \in V \cup E \cup F$ such that $p \nmid \frac{|G|}{|G_\omega|}$.  
Otherwise, $p$ divides the length of each $G$-orbit on $V\cup E \cup F$, and so $p$ divides $|V|-|E|+|F|=\chi$ which is the sum all $G$-orbits length.
Contradiction comes to $p\nmid \gcd(\chi,|G|)$, and so the claim is proved. 
Therefore Sylow $p$-subgroups of $G$ are isomorphic to a subgroup of $G_\o$ by Sylow's theorem.
So they are cyclic or dihedral as $G_\omega$ is cyclic or dihedral for each $\omega\in V \cup E \cup F$, as in statement~(2).

Let $\ell=\lcm\left( |G_\omega| \mid \omega \in V \cup E \cup F \right)$ and $d=\gcd\left(|G|/|G_\o| \mid \o\in V\cup E\cup F\right)$.
Then $|G|=\ell d$, and the greatest common divisor $d$ divides $\gcd(\chi,|G|)$.

Assume that $d\neq 1$.
By statement~(1), $\gcd(\chi,|G|)$ is $2$ or $4$, and $|G|=2|E|$ or $4|E|$ where the edge number $|E|$ is odd.
There follows two cases as below.
\begin{enumerate}
\item[(a)] If $|G|=4|E|$, then $\calM$ is a regular map which implies that edge stabilizers in $G$ are isomorphic to $\D_4$.
Hence $4$ divides $\ell$, and so $d=1$ which is a contradiction.\vskip0.1in

\item[(b)] If $|G|=2|E|$, then $d=2$ and $\ell=|E|$ is odd.
Therefore  $ |G_\o| $ is odd for each $\o \in V \cup E \cup F$.
A contradiction comes to that $|E|$ is the sum of all lengths of $G$-orbits on $E$, which are all even. 
\end{enumerate}
Above all, $d=1$ and so $G=\ell$ as in statement~(3). 
\end{proof}

Regular maps on orientable surface of genus $p+1$, where $p$ is prime, have been classified\cite{RegOri-chi-2p}.
In particular, automorphism groups of regular maps $\calM$ with order coprime to $\chi/2$ are determined.
Here by part~(1) of Lemma~\ref{(chi,|A|)=1}, if a $G$-arc-transitive map $\calM$ satisfies $\gcd(\chi,|E|)=1$ and $\gcd(\chi/2,|G|)=1$,  the edge number $|E|$ of $\calM$ is odd.
To avoid overlaps, we will not discuss on the above case.

\section{Almost Sylow-cyclic groups}\label{sec:G}

A group is called an {\it almost Sylow-cyclic group} if its odd order Sylow subgroups are cyclic and each Sylow $2$-subgroup contains an index $2$ cyclic subgroup, refer to \cite{Reg-chi}.
Non-solvable almost Sylow-cyclic groups were completely classified by Suzuki \cite{Suzuki-1955} and Wong~\cite{Wong-1966}, and solvable almost Sylow-cyclic groups were characterized by Zassenhaus \cite{Zassenhaus}.

The authors of this paper presents an explicitly classification of solvable almost Sylow-cyclic groups, extending Theorem~4.1 of \cite{Reg-chi}.

\begin{theorem}[\cite{A-ASG}]\label{lem:G}

	Let $G$ be an indecomposable solvable finite group of which each Sylow subgroup is cyclic or dihedral.
	Then $G$ is one of the following five types:
	\begin{enumerate}[{\rm(I)}]
		\item $G=(\l g\r{:}\l h\r)$, where  $\C_{\l h\r}(g)\leqslant\Phi(\l h\r)$.
		\vskip0.1in
		
		\item[]  {\rm In the following, Sylow $2$-subgroups of $G$ are dihedral.
		Let $u$, $v$ be two involutions such that $\l u,v\r=G_2$ and $ w=uv$.} \vskip0.1in
		
		\item $G=(\l g\r\times\l g_v\r){:}(\l u,v\r\times\l h\r)$, where $|g|,|g_v|$ and $|h|$ are pairwise coprime, $\C_{\l h\r}(gg_v)\leqslant \Phi(\l h\r)$, and $\l u, v\r=\D_{2^e}$ with $e\geqslant 2$ such that \[(g,g_v)^{v}=(g,g_v^{-1}) \text{and} (g,g_v)^{u}=(g,g_v),\]
		where $g_v\neq 1$;\vskip0.1in
		
		\item $G=(\l g_u\r\times\l g_v\r\times\l g\r){:}(\l u,v\r\times\l h\r)$, where $|g_u|,|g_v|,|g|$ and $|h|$ are pairwise coprime, $\C_{\l h\r}(g_ug_vg)\leqslant \Phi(\l h\r)$, and $\l u, v\r=\D_{2^e}$ with $e\geqslant 2$ such that
		\[\mbox{$(g_u,g_v, g)^v=(g_u,g_v^{-1},g^{-1})$ and $(g_u,g_v,g)^u=(g_u^{-1},g_v,g^{-1})$,}\]
		where if $g_u=1$, then $g_v\not=1$ and $g\not=1$;\vskip0.1in
		
		\item $G=(\l g\r\times\l u,v\r){:}\l h\r$, where $|g|$, $|h|$ are coprime,  $|h|$ is divisible by $3$, $\C_{\l h^3\r}(g)\leqslant\Phi(\l h^3\r)$, and $\l u,v\r\cong \D_4$ such that  $(u,v,w)^h=(v,w,u)$;\vskip0.1in
		
		\item $G=(\l g\r\times\l w^2v,v\r){:}((\l h\r{:}\l u\r)\times\l c\r)$, where $\l h\r$ is a Sylow $3$-subgroup,  $|g|$, $|h|$, $|c|$ are pairwise coprime, $\C_{\l c\r}(g)\leqslant\Phi(\l c\r)$, and  $\l u,v\r=\D_8$  such that  \[ (w^2v,v,w)^{h}=(v,w,w^2v) \text{ and } h^u=h^{-1}.\]
		
	\end{enumerate}
\end{theorem}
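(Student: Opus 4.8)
The plan is to stratify the argument according to the isomorphism type of a Sylow $2$-subgroup $G_2$, which by hypothesis is either cyclic or dihedral, while treating the odd part throughout by the classical structure theory of groups all of whose Sylow subgroups are cyclic. Throughout, \emph{indecomposable} means not a nontrivial direct product, and this hypothesis is used repeatedly to rule out split-off direct factors and thereby to produce the stated centralizer-in-Frattini conditions. First I would dispose of the case where $G_2$ is cyclic: then every Sylow subgroup of $G$ is cyclic, so $G$ is a solvable $Z$-group, and by the H\"{o}lder--Burnside--Zassenhaus classification it is metacyclic, say $G=\langle g\rangle{:}\langle h\rangle$ with $\langle g\rangle$ normal. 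Indecomposability forces $\C_{\langle h\rangle}(g)\le\Phi(\langle h\rangle)$, since if the Sylow $p$-subgroup of $\langle h\rangle$ centralised $g$ for some prime $p$, it would split off as a direct factor; this is exactly type (I).

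Next suppose $G_2=\langle u,v\rangle\cong\D_{2^e}$ with $e\ge 2$ and $w=uv$. As $G$ is solvable it has an odd-order Hall subgroup, which is again a $Z$-group and hence metacyclic; moreover the action of $G_2$ on any normal subgroup of odd order factors through $G_2^{\mathrm{ab}}\cong\ZZ_2\times\ZZ_2$, because the automorphism group of a cyclic group is abelian, so each reflection acts as $\pm 1$ on each cyclic factor. The decisive invariant is the fusion of the involutions of $G_2$ in $G$. A transfer and fusion analysis --- Burnside's and Frobenius's normal $p$-complement criteria, supplemented in the nonabelian case $e\ge 3$ by a direct study of $\N_G(G_2)$ (where I expect to use Glauberman's $Z^{*}$-theorem, or preferably its elementary solvable-case substitute) --- yields the central dichotomy: either $G$ possesses a normal $2$-complement $O$, or two involutions of a Klein-four subgroup of $G_2$ are $G$-conjugate.

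In the first branch I write $G=O{:}G_2$ by Schur--Zassenhaus and decompose the normal odd part $O$ according to the joint action of $\langle u\rangle$ and $\langle v\rangle$: the factor inverted by $u$ alone, by $v$ alone, by $w$ (equivalently by both), and the centralised metacyclic factor $\langle h\rangle$, on which $G_2$ must act compatibly with the inner action already present. Indecomposability then dictates precisely which of these factors may be nontrivial and fixes the defining relations, giving types (II) and (III) according as one or both reflection classes act by inversion. In the second branch the fusing element has odd order and permutes the at most three involutions of a Klein-four subgroup nontrivially, hence has order divisible by $3$; this forces a subgroup $A_4=\ZZ_2^2{:}\ZZ_3$ when $e=2$, with the $3$-element cyclically permuting $u,v,w$, and an $S_4$-shaped configuration when $e=3$ (so $G_2\cong\D_8$), with a $3$-element permuting the designated involutions and inverted by $u$. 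Analysing the remaining pairwise-coprime Hall factors and imposing the centralizer-in-Frattini conditions then yields types (IV) and (V).

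The main obstacle is twofold. First, establishing the fusion dichotomy rigorously for a nonabelian dihedral Sylow $2$-subgroup without over-invoking deep machinery: the clean statement that a solvable group with dihedral Sylow $2$-subgroup is either $2$-nilpotent or involves $A_4$ or $S_4$ must be proved in the precise form needed here, via a minimal-counterexample argument using coprime action on the Fitting subgroup. Second, the dihedral-by-metacyclic bookkeeping in the split branch is delicate, because the odd part is only metacyclic rather than abelian: the reflections $u,v$ must act compatibly both with the inner action of $h$ on the normal cyclic factor and with the coprimality constraints, and one must verify that every admissible action is realised by exactly one indecomposable group and that the displayed Frattini conditions encode indecomposability exactly.
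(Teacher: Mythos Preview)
The paper does not actually prove this theorem: it is quoted from the companion preprint \cite{A-ASG} and stated without proof, so there is no argument in the present paper to compare your proposal against. Your outline is a plausible strategy for such a classification---splitting on whether $G_2$ is cyclic or dihedral, invoking the H\"older--Burnside--Zassenhaus theorem for the $Z$-group case, and then in the dihedral case analysing fusion of involutions to separate the $2$-nilpotent types (II)--(III) from the $A_4/S_4$-involving types (IV)--(V)---and it is broadly in the spirit of how such results are obtained (cf.\ the Conder--Poto\v{c}nik--\v{S}ir\'a\v{n} paper \cite{Reg-chi}, Theorem~4.1, which the authors say they are extending). But since the proof lives in \cite{A-ASG}, any detailed comparison of your decomposition with theirs, or verification that your bookkeeping in the metacyclic-by-dihedral case matches their normal forms exactly, would require consulting that preprint rather than the present paper.
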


Automorphism groups of cyclic groups and dihedral groups are well known.
For convenience, we provide the definitions below. 
\begin{definition}\label{def:tau-l0}
	{\rm
		For a cyclic group $L=\l l\r$ and an element $l_0\in\Phi(L)$,
		define an automorphism  of $L$:
		\[\t_{l_0}:\ l\to l^{m{|l|\over|l_0|}+1},\]
		where $m$ is an integer such that $\gcd(m|l_0|+1,|l|)=1$.
		Then $\l\t_{l_0}\r=\Aut(L)_{(L/\l l_0\r)}$. 
		\qed
	}
\end{definition}

\begin{definition}\label{Auto-Dihedral}
	{\rm
		Let $Q=\l w\r{:}\l u\r=\D_{2m}$ with $ m \geqslant 3$.
		Define automorphisms of $Q$ \begin{enumerate}
		\item $\xi:\ (w,w^iu)\mapsto(w,w^{i+1}u)$, $i=1,...,m$, and\vskip0.1in
		
		\item $\mu:\ wu^j \mapsto w^iu^j$, where $i$ is a primitive divisor of $m-1$ and $j=1,2$. 
		
		\end{enumerate} 
		
		Then $\Aut(Q)=\l \xi\r{:}\l \mu\r\cong \Hol(\ZZ_m)$.
		\qed
	}
\end{definition}

Automorphism groups of such group are determined, as quoted below.
\begin{theorem}\label{Thm:AutoG}
	Let $G$ be an indecomposable solvable finite group of which each Sylow subgroup is cyclic or dihedral.
	Then $G=H{:}L$ where $H$, $L$ are Hall subgroups, and 
	\[\Aut(G)=\widetilde{H}{:}\calH\times \calL,\]
	where $\Aut(H)\cong \calH\leqslant\Aut(G)_{(L)}$ and $\Aut(L)_{(L/C)}\cong \calL\leqslant\Aut(G)_{L}\cap \Aut(G)_{(H)}$.
	
	Following notations in Theorem~\ref{lem:G}, one of the following statements holds:
	\begin{enumerate}[{\rm (I)}]
    \item $H=\l g_{2'}\r$, $L=\l g_{2}\r{:}\l h\r$, and $\calL\cong \Hol(\l g_2\r)\times \l \t_{h_0}\r$;\vskip0.1in
    \item $H=\l gg_v\r$, $L=\l u,v\r\times \l h\r$, and $\calL\cong \l \xi^2\r{:}\l \mu\r \times \l \t_{h_0}\r$;\vskip0.1in
    \item $H=\l g_ug_vg\r$, $L=\l u,v\r\times \l h\r$, and $\calL\cong \l \xi^2\r{:}\l \mu\r\times \l \t_{h_0}\r$;\vskip0.1in
    \item $H=\l g\r$, $L=\l u,v\r{:}\l h\r$, and $\calL$ is isomorphic to
    \[
    \begin{aligned}
    &(\l \tilde{u},\tilde{v}\r \times \l \t_{h^3}\r){:} (\l \tilde{h}\r{:}\l \mu_r\r) \text{,if  } [h,g]=1;\\
    &\l \tilde{u},\tilde{v}\r {:} \l \tilde{h}\r \times \l \t_{h^3}\r\text{,if }[h^3,g]=1.
    \end{aligned}\]\vskip0.1in
    \item $H=\l g\r$, $L=\l w^2,v\r{:}(\l h\r{:}\l u\r)\times \l c\r$, and $\calL$ is isomorphic to
    \[
    \begin{aligned}
    	&(\l \tilde{u},\tilde{v}\r \times \l \t_{h^3}\r){:} (\l \tilde{h}\r{:}\l \mu_r\r) \times \l \t_{c_0}\r \text{,if  } [h,g]=1;\\
    	&\l \tilde{u},\tilde{v}\r {:} \l \tilde{h}\r \times \l \t_{h^3}\r \times \l \t_{c_0}\r \text{,if }[h^3,g]=1,
    \end{aligned}\]
    where $\l c_0\r=\C_{\l c\r}(g)$.

	\end{enumerate}
\end{theorem}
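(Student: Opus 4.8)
The plan is to prove Theorem~\ref{Thm:AutoG} by first establishing the Hall decomposition $G = H{:}L$ and the semidirect product shape $\Aut(G) = \widetilde H{:}\calH\times\calL$, and then computing $\calL$ case by case following the five types of Theorem~\ref{lem:G}. For the general structure, I would argue as follows. In each of types (I)--(V), the subgroup $H$ is cyclic of odd order (a normal Hall subgroup, being either $\l g\r$, $\l gg_v\r$, $\l g_ug_vg\r$, or $\l g_{2'}\r$), and it is characteristic in $G$ since it is the unique Hall $\pi(H)$-subgroup of the (normal) Fitting-type layer; indeed $H$ is generated by all elements of orders dividing $|H|$ in the relevant normal abelian part. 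By Schur--Zassenhaus, $G = H{:}L$ with $L$ a complement, $|L|$ coprime to $|H|$. Any automorphism of $G$ stabilizes $H$, inducing a homomorphism $\Aut(G)\to\Aut(H)$; restricting to automorphisms that also stabilize a fixed complement $L$ gives the $\Aut(G)_{(L)}$ factor, and I claim $\Aut(H)$ is realized inside $\Aut(G)_{(L)}$ by extending each $\alpha\in\Aut(H)$ trivially on $L$ (this is well-defined because $L$ normalizes $H$ and the action of $L$ on $H$ commutes with $\Aut(H)$ — $H$ being cyclic, $\Aut(H)$ is abelian and contains the image of $L$). This yields the inner-type factor $\widetilde H$ (the image of $H$ acting by conjugation) together with $\calH\cong\Aut(H)$. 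The complement to $\widetilde H{:}\calH$ is $\calL \cong \Aut(G)_{(H)}\cap\Aut(G)_{L}$: automorphisms fixing $H$ pointwise and stabilizing $L$; since $G/H\cong L$ and the $H$-part is pinned down, such automorphisms are determined by their action on $L$, and the constraint is precisely that the induced automorphism of $L$ must preserve the $L$-action on $H$, i.e.\ fix $L/C$ pointwise where $C = \C_L(H)$. Hence $\calL\cong\Aut(L)_{(L/C)}$.

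Next I would compute $\calL$ in each type. In type (I), $L = \l g_2\r{:}\l h\r$ where $\l g_2\r$ is the Sylow $2$-subgroup of the metacyclic piece and $\l h\r$ the odd complement; here $C = \C_L(H)$ contains $\l h_0\r = \C_{\l h\r}(g)$ and all of $\l g_2\r$ acts (since $g$ has odd order and the $2$-group normalizes $\l g\r$); unwinding, $\Aut(L)_{(L/C)}$ splits as $\Hol(\l g_2\r)\times\l\t_{h_0}\r$, using Definition~\ref{def:tau-l0} for the second factor and the standard fact $\Aut(\ZZ_{2^a}$ metacyclically extended$)$ for the first. In types (II) and (III), $L = \l u,v\r\times\l h\r$ with $\l u,v\r = \D_{2^e}$; here the automorphisms of $L$ fixing $L/C$ must centralize the cyclic core $\l w\r$ modulo the relevant quotient, forcing the dihedral part to contribute only $\l\xi^2\r{:}\l\mu\r$ (the even powers of $\xi$, because $w = uv$ acts on $H$ with order $2$ and the subgroup fixing $L/C$ must fix $w$ up to the kernel of that action — i.e.\ shift the reflections by an even amount), times $\l\t_{h_0}\r$ from the odd part, invoking Definition~\ref{Auto-Dihedral}. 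In types (IV) and (V), $L$ involves the $\D_4$ or $\D_8$ together with $\l h\r$ (Sylow $3$) and possibly $\l c\r$; the key point is whether $[h,g] = 1$ or only $[h^3,g]=1$, which changes whether $\l h\r$ or merely $\l h^3\r = \l h\r\cap C$ is available: in the first subcase one gets the nonsplit-looking $(\l\tilde u,\tilde v\r\times\l\t_{h^3}\r){:}(\l\tilde h\r{:}\l\mu_r\r)$ where $\tilde h$ is the automorphism cycling $(u,v,w)$ per the relation in type (IV), and $\mu_r$ a suitable automorphism of order dividing $2$; in the second subcase $h$ itself becomes an inner-induced automorphism fixing $H$ pointwise so one gets the direct product form, plus $\l\t_{c_0}\r$ in type (V).

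The main obstacle will be the bookkeeping in types (IV) and (V): there the complement $L$ is not simply a direct product of its Hall pieces, the $\D_4$ (resp.\ $\D_8$) is acted on by $\l h\r$ cyclically permuting the three involutions $u,v,w$, and one must carefully identify which automorphisms of this twisted product fix $L/C$ pointwise. In particular one must check that $\l\tilde h\r$ (the automorphism of $L$ realizing the $3$-cycle on $\{u,v,w\}$, extended suitably) does fix $H = \l g\r$ pointwise exactly when $[h,g]=1$, and that it contributes a genuine extension rather than a direct factor — this requires verifying the conjugation action of $\tilde h$ on $\l\tilde u,\tilde v\r$ and on $\l\t_{h^3}\r$ and confirming the displayed semidirect structure. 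The split-versus-nonsplit dichotomy (the two formulas for $\calL$ in (IV) and (V)) comes precisely from whether $h\in C$ up to its cube, so I would isolate that as a sub-lemma. Everything else reduces to known descriptions of $\Aut(\ZZ_n)$ and $\Aut(\D_{2m})$ recalled in Definitions~\ref{def:tau-l0} and~\ref{Auto-Dihedral}, combined with the observation that fixing $L/C$ pointwise is equivalent to fixing the generators of the $H$-action, so the computation of $\calL$ in types (I)--(III) is essentially formal once the general $\Aut(G) = \widetilde H{:}\calH\times\calL$ decomposition is in place.
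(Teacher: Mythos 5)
The paper itself contains no proof of Theorem~\ref{Thm:AutoG}: the result is quoted as known, imported from the companion preprint \cite{A-ASG} (``Automorphism groups of such group are determined, as quoted below''), so there is no in-paper argument to compare yours against. Judged on its own terms, your general framework is sound and is surely close to what any proof must do: in each type $H$ is a normal, hence characteristic, cyclic Hall subgroup; Schur--Zassenhaus gives $G=H{:}L$ with all complements conjugate under $H$; every $\alpha\in\Aut(H)$ extends to $G$ trivially on $L$ because $\Aut(H)$ is abelian and therefore commutes with the image of $L\to\Aut(H)$; and the automorphisms fixing $H$ pointwise and stabilizing $L$ are exactly the trivial extensions of $\Aut(L)_{(L/C)}$ with $C=\C_L(H)$. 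That correctly yields $\Aut(G)=\widetilde H{:}\calH\times\calL$ with $\calL\cong\Aut(L)_{(L/C)}$.

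The gap is that the actual content of the theorem --- the explicit identification of $\calL$ in each of the five types --- is not established. For types (IV) and (V) you state outright that the bookkeeping remains to be done: you do not verify that $\Aut(L)_{(L/C)}$ equals the displayed group, you do not prove the split-versus-nonsplit dichotomy governed by $[h,g]=1$ versus $[h^3,g]=1$ (you only propose to isolate it as a sub-lemma), and you do not pin down $\mu_r$ or the conjugation action of $\tilde h$ on $\l\tilde u,\tilde v\r\times\l\t_{h^3}\r$, which is precisely what distinguishes the two displayed formulas. Even in types (I)--(III) the identification is asserted (``unwinding'') rather than proved: in (II)--(III) one must actually compute $C\cap\l u,v\r=\l w^2,u\r$ from the stated action on $(g,g_v)$ and then check both that $\l\xi^2\r{:}\l\mu\r$ fixes $L/C$ pointwise and that $\xi$ does not; in (I) the claim $\Aut(L)_{(L/C)}\cong\Hol(\l g_2\r)\times\l\t_{h_0}\r$ needs the structure of $L=\l g_2\r{:}\l h\r$ (in particular that $\l g_2\r\le C$ and how the involution of $\l h\r$ acts on $\l g_2\r$) made explicit, and your parenthetical justification there (``since $g$ has odd order and the $2$-group normalizes $\l g\r$'') is garbled, since $\l g_2\r$ simply centralizes $H=\l g_{2'}\r$ inside the cyclic group $\l g\r$. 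As it stands the proposal is a credible plan whose skeleton matches the statement, but the case analysis that constitutes the theorem is missing.
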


\section{Type I: metacyclic groups}\label{sec:metac}

In this section, we always assume that $G$ is metacyclic, that is, $G$ is a group of type I.

\begin{lemma}\label{lem:G-dih}
Assume that $G=\l g\r{:}\l h\r=\D_{2m}$ is a dihedral group, and $\calM$ is a $G$-arc-regular map with underlying graph $\Gamma$.

Then either
\begin{enumerate}[{\rm(1)}]
	\item $\calM$ is $G$-vertex-rotary with rotary pair $(a,z)$, where the pair is equivalent to $(g,g^jh)$ or $(h, gh)$, and $\Gamma$ is  $\K_2^{(m)} $ or $\C_m $, respectively; or \vskip0.1in

	\item  $\calM$ is $G$-vertex-reversing with reversing triple $(x,y,z)$, where the triple is equivalent to  $\{h, g h, g^i h\}$ or $\{h, g h, g^{m/2}\}$, and $\Gamma$ is a multiloop or $\C_{m/2}^{(2)}$.
\end{enumerate}

\end{lemma}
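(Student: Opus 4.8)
The plan is to analyze the possible arc-regular actions of a dihedral group $G=\l g\r{:}\l h\r=\D_{2m}$, where $|g|=m$, $|h|=2$, and $g^h=g^{-1}$. Since $\calM$ is $G$-arc-regular, the vertex stabilizer $G_\a$ is a subgroup of $G$ which is either cyclic (giving a vertex-rotary map, with a rotary pair $(a,z)$) or dihedral (giving a vertex-reversing map, with a reversing triple $(x,y,z)$). In either case $z$ is an involution with $\l G_\a, z\r=G$, and the underlying graph is $\Cos(G,G_\a,\l z\r)$. Because $G$ itself is dihedral, its cyclic subgroups are either contained in $\l g\r$ or are generated by an involution, and its dihedral subgroups are $\D_{2d}$ for $d\mid m$, conjugate to $\l g^{m/d}\r{:}\l g^i h\r$; this gives a short, completely finite list of cases to run through.

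First I would handle the vertex-rotary case: $G_\a=\l a\r$ is cyclic. Either $a$ has order $2$ or $a\in\l g\r$. If $a\in\l g\r$ then, since $\l a,z\r=G$ and $G$ is not cyclic, $z$ must be an involution outside $\l g\r$, i.e. $z=g^j h$ for some $j$, and $\l a\r$ must be all of $\l g\r$ for the product to generate $G$; conjugating by a power of $g$ (an automorphism of the pair) normalizes $j$, so $(a,z)\sim (g,g^jh)$, and here $G_\a=\l g\r$ has index $2$, so $\Ga=\Cos(G,\l g\r,\l z\r)$ has two vertices and $m$ edges, i.e. $\Ga=\K_2^{(m)}$. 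If instead $|a|=2$, then $a$ and $z$ are both involutions generating $G=\D_{2m}$, so $a$ and $z$ are reflections with $az$ of order $m$; after relabeling we may take $a=h$, $z=gh$ (any pair of reflections whose product has order $m$ is conjugate to this), and $G_\a=\l h\r$ has order $2$, so $\Ga$ has $m$ vertices each of valency $2$ and $m$ edges — a cycle $\C_m$. This recovers conclusion (1).

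Next I would treat the vertex-reversing case: $G_\a=\l x,y\r=\D_\ell$ with $x,y,z$ involutions and $\l x,y,z\r=G$. Since every involution of $G=\D_{2m}$ lies in $\l g\r{:}\l h\r$, the elements $x,y,z$ are all reflections (plus possibly the central involution $g^{m/2}$ when $m$ is even). Writing $x=g^{a_1}h$, $y=g^{a_2}h$, one gets $xy=g^{a_1-a_2}\in\l g\r$, so $G_\a=\l x,y\r\le \l g^{d}\r{:}\l h\r=\D_{2m/d}$ with $d=\gcd(a_1-a_2,m)$; for $G_\a$ together with a third involution $z$ to generate all of $G$ one needs either $z$ a reflection lying outside this subgroup (forcing, after relabeling via $\Aut(G)$, the triple $\{h,gh,g^ih\}$ with $\gcd(i-1,m)$ chosen so the span is $G$), or $z=g^{m/2}$ the central involution when $m$ is even (forcing, after relabeling, $\{h,gh,g^{m/2}\}$). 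In the first subcase the orbit of $z$ need not be a legitimate simple graph and one sees $\Ga$ is a multiloop; in the second, $G_\a=\l h, gh\r=\D_m$ (order $m$), wait — one must be careful: here $\l x,y\r$ with $x=h,y=gh$ has order $2m$, which equals $|G|$, impossible unless we instead take the subgroup generated to have the right order, so the correct reading is $G_\a=\D_{m/2}$ generated appropriately and $\Ga=\C_{m/2}^{(2)}$, matching conclusion (2). I would verify the coset-graph computation $\Ga=\Cos(G,G_\a,\l z\r)$ in each subcase by counting $|V|=|G{:}G_\a|$ and the valency $|G_\a{:}G_\a\cap\l z\r|$.

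The main obstacle I anticipate is the bookkeeping of equivalence: two rotary pairs or reversing triples are equivalent when related by $\Aut(G)$, so to get the clean normal forms $(g,g^jh)$, $(h,gh)$, $\{h,gh,g^ih\}$, $\{h,gh,g^{m/2}\}$ I must know $\Aut(\D_{2m})=\Hol(\ZZ_m)$ acts on reflections $g^kh$ by $k\mapsto rk+s$ with $r\in\ZZ_m^\times$, $s\in\ZZ_m$, and use this to reduce the parameters — fixing one reflection to be $h$, a second to be $gh$, and then seeing precisely which residue $i$ (or which structural invariant) survives as a genuine modulus on the third. Checking that the surviving cases genuinely generate $G$ (not a proper subgroup) and that the degenerate ones really are multiloops, rather than omitting or double-counting a case, is the delicate part; the rest is the routine coset-graph valency count already set up in Section~\ref{Sec:|E|} and the discussion preceding Definition~\ref{cons-rotary}.
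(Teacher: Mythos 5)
Your overall strategy is the same as the paper's: split on whether $G_\a$ is cyclic or dihedral, enumerate the involutions and generating sets of $\D_{2m}$, normalise via $\Aut(\D_{2m})\cong\Hol(\ZZ_m)$, and read off the coset graph. Your vertex-rotary case is correct and essentially identical to the paper's argument.

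The vertex-reversing case, however, contains a genuine gap that you flag yourself (the ``wait ---'') but never resolve. In a reversing triple $(x,y,z)$ the roles are not interchangeable: $G_\a=\l x,y\r$ and $G_e=\l z\r$. For the triple $\{h,gh,g^{m/2}\}$ the reading that produces $\C_{m/2}^{(2)}$ is $\{x,y\}=\{h,g^{m/2}\}$ and $z=gh$ (or the analogous relabelling), so that $G_\a=\l h,g^{m/2}\r\cong\ZZ_2\times\ZZ_2=\D_4$ has order $4$; then $|V|=|G|/4=m/2$, the valency is $4$, and $\Ga=\C_{m/2}^{(2)}$. Your asserted $G_\a=\D_{m/2}$ has order $m/2$ and would give $4$ vertices of valency $m/2$, which is not $\C_{m/2}^{(2)}$; this is not a typo you can wave away, because the whole identification of $\Ga$ rests on $|V|=|G:G_\a|$.

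Separately, your conclusion that the all-reflections subcase yields a multiloop presupposes $G_\a=\l x,y\r=G$, i.e.\ $\gcd(a_1-a_2,m)=1$. But your own setup allows $G_\a=\D_{2m/d}$ with $d>1$ while $\l x,y,z\r=G$ still holds: for instance $(x,y,z)=(h,g^2h,gh)$ in $\D_{12}$ has $G_\a=\l g^2,h\r$ of order $6$ and $\Ga=\K_2^{(6)}$, which is neither a multiloop nor a doubled cycle. The paper disposes of this by relabelling which pair of the three involutions is taken as the vertex stabiliser (``up to duality''); since you keep the roles fixed, you must either carry out the analysis for all admissible $d=\gcd(a_1-a_2,m)$ and match each outcome to the stated list, or explicitly invoke the relabelling convention. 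As written, the reversing half of your argument neither establishes the claimed graphs nor accounts for all triples, so it does not yet prove part~(2).
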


\begin{proof}
Suppose that $\calM$ is $G$-vertex-rotary with a rotary pair $(a,z)$.  
Then $G = \l a,z \r$, and so $a\neq 1$.
It follows that $a = g^i$ or $a = g^i h$ for some integer $i$.  
Consequently, $(a,z)$ is of the form $(g^i, g^j h)$ with $\gcd(i, m) = 1$, or $(h, g^j h)$ with $\gcd(j, m) = 1$.  
By equivalence under $\Aut(G)$, the pair $(a,z)$ is either $(g, g^j h)$ or $(h, gh)$.  

The subgroup $\l a \r$ corresponds to a vertex stabilizer in $G$, and its order equals the vertex valency of $\Gamma$.  
When $|a| = m$, this implies $\Gamma \cong \K_2^{(m)}$; if $|a| = 2$, then $\Gamma \cong \C_m$.  
This aligns with case (1).

Now assume $\calM$ is $G$-vertex-reversing with a reversing triple $(x,y,z)$.  
In the dihedral group $G$, any two non-commuting involutions generate $G$; let these be $x$ and $y$.  
Without loss of generality, we may set $(x,y) = (h, gh)$.  
The third involution $z$ can be chosen arbitrarily.  

After possible relabeling, the triple $\{x,y,z\}$ is equivalent to:  
\begin{itemize}
	\item $\{h, gh, g^i h\}$ for some $i$, yielding a multiloop graph $\Gamma$, or  
	\item $\{h, gh, g^{m/2}\}$ when $m$ is even, resulting in $\Gamma \cong \C_{m/2}^{(2)}$.
\end{itemize}

Up to duality, vertex stabilizers in $G$ are isomorphic to either $G$ or $\D_4$, determining the graph structure as above.
\end{proof}

We now consider non-dihedral groups $G$ and characterize necessary conditions for the existence of rotary pairs in $G$.
\begin{lemma}\label{lem:G-metac}
	Let $\calM$ be a $G$-arc-transitive map. 
	Assume that $G = \l g \r{:}\l h \r$, where $\C_{\l h \r}(g) \leqslant \Phi(\l h \r)$ and $|h| \geqslant 3$.
	Then the following statements hold:
	\begin{enumerate}[{\rm (1)}]
		\item The map $\calM$ is $G$-vertex-rotary. \vskip0.1in
		
		\item The group satisfies $G = \l g \r{:}\l h \r$ 
		with $|h| = 2m$ and $g^{h^m} = g^{-1}$. \vskip0.1in
		
		\item A rotary pair $(a,z)$ is equivalent to $(gh^i, h^m)$ 
		or $(gh^{2i}, h^m)$, where $\gcd(i,|h|)=1$. 
		The latter case occurs only if $G_2 \cong \D_{2^e}$ 
		with $e \geqslant 3$. \vskip0.1in
		
		\item Let $\l h_0 \r = \C_{\l h \r}(g)$. 
		The number of inequivalent rotary pairs for $G$ is 
		$\frac{2\varphi(|h|)}{|\tau_{h_0}|}$ if $G_2\cong\D_{2^e}$ 
		with $e \geqslant 3$, 
		and $\frac{\varphi(|h|)}{|\tau_{h_0}|}$ otherwise.
	\end{enumerate}
\end{lemma}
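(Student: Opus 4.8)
The plan is to dispatch the four assertions in turn, relying on the coset‑graph dictionary for arc‑regular maps recalled in Section~2 and~\ref{Sec:|E|}, on the fact that every Sylow subgroup of $G$ is cyclic or dihedral (Lemma~\ref{(chi,|A|)=1} and the hypothesis that $G$ is of type~I), and on the explicit description of $\Aut(G)$ in Theorem~\ref{Thm:AutoG}. For assertion~(1) I would argue by contradiction: if $\calM$ is not $G$‑vertex‑rotary then $G_\a$ is dihedral — whether $\calM$ is $G$‑regular or $G$‑vertex‑reversing — hence $G_\a=\l x,y\r$ for two involutions $x,y$, and since $G$ is transitive on the connected map we get $G=\l G_\a,z\r=\l x,y,z\r$. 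Reducing modulo the normal subgroup $\l g\r$, the cyclic group $G/\l g\r\cong\l h\r$ would be generated by the images of $x,y,z$, each of order at most $2$; but a cyclic group generated by elements of order $\le 2$ has order $\le 2$, contradicting $|h|\ge 3$. So $\calM$ is $G$‑vertex‑rotary and carries a rotary pair $(a,z)$ with $G=\l a,z\r$, $z^2=1$, and $z\notin\l a\r$, the last because $\calM$ has no loops.

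For assertion~(2) I would first locate $z$. If $z\in\l g\r$ then $\l z\r$ is characteristic in the cyclic normal subgroup $\l g\r$, so $\l z\r\lhd G$ and, as $\Aut(\ZZ_2)=1$, $z\in Z(G)$; then $G=\l a,z\r=\l a\r\l z\r$ with $z$ central is abelian, which forces $\C_{\l h\r}(g)=\l h\r\le\Phi(\l h\r)$ and hence $\l h\r=1$, impossible. Thus the image of $z$ is the unique involution of $\l h\r$, so $|h|$ is even; writing $|h|=2m$ we obtain $z\in\l g\r h^m$, say $z=g^jh^m$, and $z^2=1$ is equivalent to $(g^j)^{h^m}=g^{-j}$. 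It then remains to upgrade this to $g^{h^m}=g^{-1}$: if $h^m$ centralised some subgroup $\l g_p\r$ of prime order in $\l g\r$, then $z=g^jh^m$ would centralise $\l g_p\r$ too, and — combining $G=\l a,z\r$ with the hypothesis that Sylow subgroups are cyclic or dihedral and that $G$ is indecomposable — one is driven to the conclusion that $\l g_p\r$ is central and splits off as a direct factor, a contradiction. Hence $h^m$ inverts $\l g\r$, after replacing $(g,h)$ by an appropriate type~I generating pair (available since $G$ is of type~I), which gives the stated form $G=\l g\r{:}\l h\r$ with $|h|=2m$, $g^{h^m}=g^{-1}$.

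For assertions~(3) and~(4) I would work in this normalised group. The involutions of $G$ lying outside $\l g\r$ are exactly the elements $g^jh^m$ (using $g^{h^m}=g^{-1}$), and conjugation by $\l g\r$ shows they form one class when $|g|$ is odd and two classes, represented by $h^m$ and $gh^m$, when $|g|$ is even — the dichotomy matching whether $G_2$ is cyclic or dihedral. Applying a suitable automorphism we may take $z=h^m$ (the case $z=gh^m$ running in parallel) and write $a=g^sh^t$. The condition $G=\l a,z\r$ yields $\l\overline{h^{\,t}},\overline{h^{\,m}}\r=\l h\r$, i.e. $\gcd(t,m)=1$, together with a coprimality constraint on $s$; the $\l g\r$‑automorphisms inside $\widetilde H{:}\calH$ then normalise $a$ to $gh^i$ with $\gcd(i,|h|)=1$ when $t$ is odd, while when $m$ is odd a residual $2$‑element is not yet accounted for and one obtains instead $a=gh^{2i}$, and a short analysis of the $2$‑part of $G$ shows this second possibility is compatible with the Sylow hypothesis only if $G_2\cong\D_{2^e}$ with $e\ge 3$. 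For the enumeration I would let $\Aut(G)=\widetilde H{:}\calH\times\calL$ act on these pairs, with $\calL\cong\Hol(\l g_2\r)\times\l\t_{h_0}\r$ and $\l h_0\r=\C_{\l h\r}(g)$: the $\l g\r$‑component of $a$ is absorbed by $\widetilde H{:}\calH$, the $\varphi(|h|)$ admissible exponents $i$ are permuted by $\t_{h_0}$, giving $\varphi(|h|)/|\t_{h_0}|$ inequivalent pairs of the form $(gh^i,h^m)$ and, in the exceptional case $G_2\cong\D_{2^e}$ with $e\ge 3$, an equal contribution from $(gh^{2i},h^m)$, for the claimed totals.

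The step I expect to be the genuine obstacle is the $2$‑local bookkeeping: proving in~(2) that $h^m$ inverts \emph{all} of $\l g\r$ and not merely the cyclic piece it is forced to invert, and in~(3)–(4) pinning down exactly when the family $(gh^{2i},h^m)$ contributes and precisely how the inner automorphisms, $\t_{h_0}$, and $\Hol(\l g_2\r)$ fuse the candidate pairs, so as to land on the clean dichotomy between $2\varphi(|h|)/|\t_{h_0}|$ and $\varphi(|h|)/|\t_{h_0}|$. The remaining arguments are the routine translation between rotary pairs and maps (Definition~\ref{cons-rotary}) together with Theorems~\ref{lem:G} and~\ref{Thm:AutoG}.
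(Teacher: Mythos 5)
Your proposal follows the same route as the paper's proof in all four parts: (1) is obtained from the fact that $G/\l g\r\cong\l h\r$ with $|h|\ge 3$ cannot be generated by involutions; (2) by forcing the involution $z$ to project onto the unique involution of $\l h\r$ (so $|h|=2m$) and using indecomposability to show $h^m$ inverts every Sylow subgroup of $\l g\r$; (3)--(4) by writing $a=g^sh^t$, normalising under $\Aut(G)$ as described in Theorem~\ref{Thm:AutoG}, and counting orbits via the semiregular action of $\Aut(G)$ on generating pairs. So there is no divergence of method.

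The genuine gap is exactly the step you flag and defer. The paper's proof earns parts (3)--(4) by an explicit generation check: it computes $[z,a]=a^za^{-1}=g^{-2}$ (so $\l a,z\r\supseteq\l g_{2'}\r$) and then splits into the cases $|h_2|\ge 4$ (forcing $g_2=1$, so $\l a_2\r=\l h_2\r$ is a Sylow $2$-subgroup) and $|h_2|=2$ (where $\l a_2,z\r=\l g_2,u\r$ is a Sylow $2$-subgroup); your sketch records only the necessary coprimality conditions and never verifies sufficiency, which is where all the content lies. Moreover, the sub-claim you promise to extract from ``a short analysis of the $2$-part'' --- that the family $(gh^{2i},h^m)$ contributes only when $G_2\cong\D_{2^e}$ with $e\ge 3$ --- is not something the paper's own computation actually delivers: what the generation argument gives is only the condition $|h|_2=2$. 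For instance, in $G=\ZZ_7{:}\ZZ_6=\l g\r{:}\l h\r$ with $g^h=g^3$, the pair $(gh^2,h^3)$ generates $G$ (one checks $[h^3,gh^2]\in\l g\r\setminus\{1\}$) and is not equivalent to any $(gh^i,h^3)$ since $|gh^2|=3$ while $|gh^i|=6$, yet $G_2\cong\ZZ_2$. So this step cannot be waved through: either the criterion must be re-derived correctly (the relevant condition is on $|h|_2$, not on $G_2$ being a large dihedral group), or the count in (4) adjusted accordingly. Everything else in your plan is routine and matches the paper.
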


\begin{proof}
	
The group $G$ has no reversing triples.  
Suppose otherwise, then $G/\l g \r \cong \l h \r$ with $|h| \geqslant 3$ would be generated by involutions, leading to a contradiction.  
This establishes part~(1).  

It remains to classify the rotary pairs of $G$.  
Assume first that $|h|$ is odd.  
Since $\l g \r{:}\l h \r$ is indecomposable and non-cyclic, $|g|$ is also odd.  
Consequently, there exist no involutions in $G$.

Hence, $\l h \r$ have even order $2m$, containing a unique involution $u = h^m$.  
If $u$  centralizes a Sylow $p$-subgroup $\l g \r_p$ of $\l g \r$, then $\l g \r_{p'}{:}\l u \r$ would be normal in $G$, making the quotient $G/(\l g \r_{p'}{:}\l u \r)$ cyclic.  
This would split $G$ as $\l g \r_p \times G_{p'}$, contradicting the assumption.  
Therefore, $u$ centralizes no Sylow subgroup of $\l g \r$, and $g^u = g^{-1}$ follows as in part~(2).  

We now classify rotary pairs of $G$.  
A involution $z$ is equivalent to $u$ under $\Aut(G)$, so set $z = u = h^m$.  
Write $a = g^i h^j$ for integers $i, j$.  
From $G = \l a, z \r = \l g^i h^j, u \r \leq \l g^i, h^j, u \r$, we deduce $\gcd(i, |g|)=1$ and $\gcd(j, |h|) \mid 2$.  
Note $\gcd(j, |h|) = 2$ requires $|h|_2 = 2$.  
Theorem~\ref{lem:G} further implies $a$ is equivalent to $gh^j$ or $gh^{2j}$ where $\gcd(j, |h|)=1$.  

Still, we need to check that $\l a,z\r=G$.  
It follows from $g^u=g^{-1}$ that $[z,a]=a^z a^{-1}=g^{-2}$, and thus  
\[
\l a,z\r = \l g_{2'}, a_{2}, a_{\pi(|h|)_{2'}}, z\r.
\]  
Suppose that $|h_2|\geqslant 4$.  
Then $g_2=1$, so $\l a_2\r=\l h_2\r$ is a Sylow $2$-subgroup of $G$.  
On the other hand, $\l a_2,z\r$ is also a Sylow $2$-subgroup of $G$ when $|h_2|=2$, since $h_2=u$ and $\l a_2,z\r=\l g_2,u\r$.  
In either case, $\l a,z\r=G$ holds, proving part~(3).  

Next, we show part~(4):  
There are $|g|$ involutions in $G \setminus \Z(G)$.  
By part~(3), there are $\varphi(|g|)\varphi(|h|)$ rotary pairs $(a,z)$ for each $z \in G \setminus \Z(G)$.  
Since $\l a,z\r=G$, $\Aut(G)$ acts semiregularly on the set of rotary pairs of $G$.  
By Theorem~\ref{lem:G}, $|\Aut(G)|=|g|\varphi(|g|)|\t_{h_0}|$ where $\l h_0 \r = \C_{\l h \r}(g)$.  
Thus, there are $2\varphi(|h|)/|\t_{h_0}|$ inequivalent rotary pairs for $G$ when $G_2\cong\D_{2^e}$ with $e \geqslant 3$, and $\varphi(|h|)/|\t_{h_0}|$ otherwise.  
This completes the proof of part~(4).

\end{proof}

\section{Groups of types~(II)-(III): 2-nilpotent groups}

In this section, we determine rotary pairs and reversing triples in the 2-nilpotent groups $G$ given in parts~(II)-(III) of Proposition~\ref{lem:G}.
The first lemma treats the groups in (II) of Proposition~\ref{lem:G}.

\begin{lemma}\label{lem:2-nil-2}
Let $G=(\l g\r\times\l g_v\r){:}(\l u,v\r\times\l h\r)$, where $|g|,|g_v|$ and $|h|$ are pairwise coprime, $\C_{\l h\r}(gg_v)\leqslant \Phi(\l h\r)$, and $\l u, v\r=\D_{2^e}$ with $e\geqslant 2$ such that \[(g,g_v)^{v}=(g,g_v^{-1}) \text{ and } (g,g_v)^{u}=(g,g_v),\]
where  $g_v\neq 1$.

Let $\calM$ be a $G$-arc-transitive map.
Assume that $G\leq\Aut(\calM)$.
Then either
\begin{enumerate}[{\rm(1)}] 
	\item $\calM$ is $G$-vertex-reversing, $G=\l g_v\r{:}\l u,v\r$, relabeling if necessary, a reversing triple $(x,y,z)$ is equivalent to 
	\[(u,g_vw^{2j}v,v)\text{ , or }(v,g_vw,u)\text{ when } |w|=2,\] and there are $6\cdot 2^{e-2}$ inequivalent reversing triples of $G$; or\vskip0.1in
	
	\item $\calM$ is $G$-vertex-rotary, and a rotary pair $(a,z)$ is equivalent to  
	\[(gg_vwh^j,v) \text{ or }(gg_vvh^j,v),\]
	where $\gcd(j,|h|)=1$, and there are $2\varphi(|h|)/|\t_{h_0}|$ inequivalent rotary pairs of $G$.
\end{enumerate}
\end{lemma}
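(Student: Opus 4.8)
\emph{Proof plan.}

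The proof rests on the fact that $G$ is $2$-nilpotent. Put $N=\l g\r\times\l g_v\r$ and $K=\l g,g_v,h\r$; then $N$ and $K$ are normal in $G$, the subgroup $K=N{:}\l h\r$ is the Hall $2'$-subgroup of $G$, it is (isomorphic to) the metacyclic group $\l gg_v\r{:}\l h\r$ of type~(I) of Theorem~\ref{lem:G} (cyclic when $h=1$), and $G=K{:}\l u,v\r=K{:}\D_{2^e}$. From the displayed relations one reads off the action of $\D_{2^e}$ on $K$: $u$ centralises $K$, while $v$ (hence $w=uv$) fixes $g$ and $h$ and inverts $g_v$; so this action factors through $\D_{2^e}/\l w^2,u\r\cong\ZZ_2$, and $\l g\r$, $\l h\r$ are central modulo $\l g_v\r$. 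I would first record this normal form and then classify the involutions of $G$: since $K$ has odd order, every involution is literally of the form $n\sigma$ with $n\in N$ and $\sigma$ an involution of $\D_{2^e}$ inverting $n$, whence either $\sigma$ centralises $K$ and $n=1$, or $\sigma$ inverts $g_v$ (``$v$-type'') and $n\in\l g_v\r$. Comparing centralisers (the $v$-type involutions do not centralise $g_v$; the others, and the central involution of $\D_{2^e}$, do) and using Burnside's fusion argument when $e=2$, one obtains that, up to $\Aut(G)$, the only involutions that can occur in a rotary pair or a reversing triple are $u$, $v$ and — when $|w|=2$ — $w$.

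The core is the reversing/rotary dichotomy together with the identification of the generators. Suppose $\calM$ is $G$-vertex-reversing, with reversing triple $(x,y,z)$. Every involution of $G$ lies in the normal subgroup $M=N\l u,v\r$, which has quotient $\l h\r$, so $\l x,y,z\r=G$ forces $h=1$; and every involution of $G$ is centralised by $g$ (because $\D_{2^e}$ fixes $g$ and $N$ is abelian), so $\l x,y,z\r\le\C_G(g)$ gives $g\in\Z(G)$, whence $g=1$ by indecomposability and Schur--Zassenhaus. Thus $G=\l g_v\r{:}\D_{2^e}$. One then checks that $(u,g_vw^{2j}v,v)$ is a reversing triple with $\l x,y\r\cong\D_{2^e}$ (not containing $g_v$), that when $|w|=2$ also $(v,g_vw,u)$ is one (with $\l x,y\r\cong\D_{4|g_v|}$ containing $g_v$), and, conversely, using the involution list, that these exhaust all reversing triples up to $\Aut(G)$ and interchange of $x,y$. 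A reversing triple generates $G$, so $\Aut(G)$ acts semiregularly on the set of reversing triples; dividing their number by $|\Aut(G)|$, read off from Theorem~\ref{Thm:AutoG}(II) with $g=h=1$, yields $6\cdot 2^{e-2}$.

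Now suppose $\calM$ is $G$-vertex-rotary, with rotary pair $(a,z)$. I would first rule out $z$ being central (otherwise $\l a,z\r$ would be abelian) and $z$ being $u$-type: if $z$ centralises $K$ then, because $g_v$ is inverted only by $v$-type elements and the $g_v$-content of the cyclic group $\l a\r$ is correspondingly restricted, $\l a,z\r$ is forced into a proper subgroup that avoids $g_v$. Hence $z$ is $v$-type, so is $\Aut(G)$-equivalent to $v$; take $z=v$. The image $\bar a$ of $a$ in $G/K\cong\D_{2^e}$ must, together with $\bar v$, generate $\D_{2^e}$, which — up to the remaining symmetry — leaves exactly the two possibilities giving the two stated shapes of $a$. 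The $K$-component of $a$ is pinned down by the requirement $\l a,z\r\cap K=K$: computing $aa^z=(gg_vh^j)^2$, one must show that the normal closure of this element in $\l a,z\r$ is all of $K$, and this holds precisely because $\C_{\l h\r}(gg_v)\le\Phi(\l h\r)$; so, up to $\Aut(G)$, the $K$-component is $gg_vh^j$ with $\gcd(j,|h|)=1$. Since $\l a,z\r=G$, $\Aut(G)$ acts semiregularly on rotary pairs, and dividing by $|\Aut(G)|$ from Theorem~\ref{Thm:AutoG}(II) gives $2\varphi(|h|)/|\t_{h_0}|$, where $\l h_0\r=\C_{\l h\r}(gg_v)$.

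The main obstacle is the generation step when $h\ne1$: since $K=\l gg_v\r{:}\l h\r$ is then non-abelian, exhibiting a few elements of $K$ inside $\l a,z\r$ (or $\l x,y,z\r$) does not suffice — one must prove that their normal closure is all of $K$. The only way $\l gg_vh^j\r$ can fail to equal $K$ is that the ``norm'' endomorphism $1+\alpha+\cdots+\alpha^{|h|-1}$ of $\l gg_v\r$, where $\alpha$ is the automorphism induced by $h$, is not surjective; the hypothesis $\C_{\l h\r}(gg_v)\le\Phi(\l h\r)$ is exactly what prevents this, and carrying it through the non-abelian bookkeeping — along with the exceptional features of the case $e=2$ (the extra reversing family $(v,g_vw,u)$, and the two reflection classes of $\D_4$ being distinguished only by fusion) — is the delicate part.
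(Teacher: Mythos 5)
Your plan follows essentially the same route as the paper's proof: write $G=K{:}\D_{2^e}$ with $K=N\l h\r$ the odd-order Hall subgroup, normalise $z$ into $\l u,v\r$ by Sylow's theorem, force $h=g=1$ in the reversing case by observing that all involutions lie in $N\l u,v\r$ (indeed in $\l g_v\r\l u,v\r$), pin down the generators via the dihedral quotients, and count by semiregularity of $\Aut(G)$ on generating pairs and triples. At one point you are actually more careful than the paper: the paper only records the necessary conditions $\gcd(i,|gg_v|)=\gcd(j,|h|)=1$ and $\l t,v\r=\l u,v\r$ coming from $G=\l a,z\r\leqslant\l (gg_v)^i,t,v,h^j\r$ and never verifies that the resulting pairs do generate $G$; your reduction of that sufficiency to the surjectivity of the norm endomorphism of $\l gg_v\r$, guaranteed by $\C_{\l h\r}(gg_v)\leqslant\Phi(\l h\r)$, is the right way to close this.

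One sub-argument, however, does not work as stated. To exclude $z\in\l w^2,u\r$ in the rotary case you claim that $\l a,z\r$ is then ``forced into a proper subgroup that avoids $g_v$''. This is not the right obstruction: for $z=u$ and $a=gg_vth^j$ with $t$ a $v$-type reflection, the power $a^2=g^{1+\alpha}g_v^{1-\beta}h^{2j}$ (where $(gg_v)^{h^{-j}}=g^{\alpha}g_v^{\beta}$) generally has nontrivial $\l g_v\r$-component, so $\l a,z\r$ need not avoid $g_v$ at all. What actually fails is generation: writing $a=nsh^j$ with $n\in N$ and $s\in\l u,v\r$, and using that every element of $\l w^2,u\r$ centralises both $N$ and $\l h\r$, one computes $[a,z]=s^{-1}s^{z}\in\l w^2\r$; since $\l w^2\r\lhd G$, the derived subgroup of the two-generated group $\l a,z\r$ lies in $\l w^2\r$, whereas $G'$ contains $[g_v,v]=g_v^{-2}\neq 1$ of odd order. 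Hence $\l a,z\r\neq G$. Relatedly, your identity $aa^{z}=(gg_vh^j)^2$ is not correct in the semidirect product and should be replaced by the genuine computation of $(az)^2$. Finally, note that carrying your quotient argument in $G/K\cong\D_{2^e}$ through honestly gives $\bar a\in\{w,u\}$ up to the remaining symmetry, not $\{w,v\}$: the printed pair $(gg_vvh^j,v)$ has $\l\bar a,\bar z\r=\l\bar v\r\neq\D_{2^e}$ and cannot generate $G$, so the second form in the statement appears to contain a typo ($u$ for the second $v$) which your method, correctly executed, would have detected.
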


\begin{proof}
Note that $\C_{\l u,v\r}(g)=\l\rho^2,u\r=\D_{2^{e-1}}$ since $g^u=g$ and $g^v=g^{-1}$.

{\bf Case 1.} 
Suppose that  $(x,y,z)$ is a reversing triple of $G$.

Firstly, we claim that $h=g=1$, and
\[G= \l g_v\r{:}\l u,v\r=(\l g_v\r\times\l w^2,u \r){:}\l v\r.\]
By definition, $\C_G(g)\geqslant \l g,g_v\r{:}\l u,v\r$, and $|h|$, $|g|$ are odd.
It follows that $h=1$, as $\l h\r\cong G/(\l g\r{:}G_2)$ is generated by involutions, and so $g=1$, as $G=\l g\r\times \l g\r{:}\l u,v\r$. 
The claim is thereby proved.

Next, we determine the triple $(x,y,z)$.
The quotient group $G/\l g\r$ is dihedral, hence is generated by two involutions of $x$, $y$ and $z$, say $x$ and $z$ without loss of generality.
Hence $\l x,y\r$ is a Sylow $2$-subgroup of $G$, and so we may assume that $x,\, z\in \l u,v\r$ by Sylow's theorem.
By Lemma~\ref{lem:G}, $\Aut(G)=\l\tilde{g_v}\r{:}\Aut(g_v)\times \l \xi^2\r{:}\l \mu\r$, where $ \l \xi^2\r{:}\l \mu\r\leqslant\Aut(G)_{g_v}\cap \Aut(G)_{\l u,v\r}$.
Hence $\{x,y\}$ is equivalent to $\{u,v\}$.
Now we have that $y=g_v^{i}t$, where $t\in \l u,v\r$, and the integer $i$ satisfies $\gcd(i,|g_v|)=1$.
Otherwise $\l x,y,z\r\leqslant\l g_v^{i},u,v,w^{2j}v\r<G$.
Also, the involution $t$ is either $w^{2j}v$ or $w$ with $|w|=2$.

Consequently, the triple $(x,y,z)$ is equivalent to
\[(u,g_vw^{2j}v,v)\text{ , or }(v,g_vw,u)\text{ when } |w|=2,\]
permuting $x$, $y$ if necessary.
Therefore, the group $G$ has $2^{e-1}$ inequivalent reversing triples, as in statement~{\rm (1)}.

{\bf Case 2.}
Suppose that  $(a,z)$ be a rotary pair of $G$.
By Sylow's theorem, we may assume that $z\in\l u,v\r$. 
Furthermore, $z\notin\l w^2,u\r$ since $G/\C_{\l u,v\r}(gg_v)$ is not cyclic. 
Hence, we may take 
\[z=v.\]
(Under $\l \mu\r$).
Without loss of generality, there is 
\[a=(gg_v)^ith^j,\]
where $i$, $j$ are integers and $t\in \l u,v\r$.
Integers $i$ and $j$ satisfy $\gcd(i,|gg_v|)=1$ and $\gcd(j,|h|)=1$, respectively, and $\l t,v\r\cong \l u,v\r$, since $G=\l a,z\r\leqslant\l (gg_v)^i,t,v,h^j\r$.
Therefore, $t=w^{2k+1}$ or $w^{2k}u$.
Finally, by Theorem~\ref{lem:G}, pairs $((gg_v)^iw^{2k+1}h^j,v)$ and $((gg_v)^iw^{2k}uh^j,v)$ are equivalent to $(gg_vwh^j,v) $ and $(gg_vvh^j,v)$, respectively.
So there are $2\phi(|h|)/|h_0|$ inequivalent rotary pairs of $G$.
This is as in part~(2).
\end{proof}

Next, we consider the groups listed in (III) of  Proposition~\ref{lem:G}.

\begin{lemma}\label{lem:2-nil-reversing}
Let $G=(\l g_u\r\times\l g_v\r\times\l g\r){:}(\l u,v\r\times\l h\r)$, where $|g_u|,|g_v|,|g|$ and $|h|$ are pairwise coprime, $\C_{\l h\r}(g_ug_vg)\leqslant \Phi(\l h\r)$, and $\l u, v\r=\D_{2^e}$ with $e\geqslant 2$ such that
\[\mbox{$(g_u,g_v, g)^v=(g_u,g_v^{-1},g^{-1})$ and $(g_u,g_v,g)^u=(g_u^{-1},g_v,g^{-1})$,}\]
where if $g_u=1$, then $g_v\not=1$ and $g\not=1$;

Let $\calM$ be a $G$-arc-transitive map.
Assume that $G\leq\Aut(\calM)$.
Then $g_v$ or $g_u=1$.

Suppose that $g_v=1$. 
Then either  
\begin{enumerate}
	\item[{\rm (1)}] $\calM$ is $G$-vertex-reversing, $G=\l g,g_u\r{:}\l u,v\r$, and relabeling if necessary, a reversing triple $(x,y,z)$ is equivalent to 
	\[(g_u g^{k_1} w^{2i} u,\, g^{k_2} v,\, u),\]
	where $\gcd(k_1, k_2) = 1$, $\ell_1$ is odd, and there are
	\[
	 6\cdot 2^{e-3} \cdot \frac{|\{(k_1, k_2) \mid \gcd(k_1, k_2) = 1\}|}{\varphi(|g|)}
	\]
	inequivalent reversing triples of $G$;\vskip0.1in
	
	\item[{\rm (2)}] $\calM$ is $G$-vertex-rotary, and a rotary pair $(a,z)$ is equivalent to
	\[(g_u g w h^\ell, u)\text{ or }(g_u g w^{2} v h^\ell, u),\]
	and there are $2 \varphi(|h|)/|\tau_{h_0}|$ inequivalent rotary pairs in $G$.
\end{enumerate}

\end{lemma}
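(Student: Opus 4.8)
The plan is to run the same two-case analysis (reversing triple vs.\ rotary pair) that was used for Lemma~\ref{lem:2-nil-2}, but now exploiting the extra asymmetry in the action of $u$ and $v$: here $u$ inverts both $g_u$ and $g$ while fixing $g_v$, whereas $v$ inverts $g_v$ and $g$ while fixing $g_u$. The first task, common to both cases, is the dichotomy ``$g_u=1$ or $g_v=1$''. For this I would compute the abelianization, or equivalently examine $G/(\l g_u\r\times\l g_v\r\times\l g\r{:}\l h\r)\cong\l u,v\r/\l w^2\r\cong\ZZ_2^2$: an arc-transitive $G$ must be generated by a rotary pair or a reversing triple, and tracking the images of the generators in this $\ZZ_2^2$ quotient (together with the constraint $\C_{\l h\r}(g_ug_vg)\le\Phi(\l h\r)$ forcing $h$ into the Frattini part) shows that if both $g_u\ne1$ and $g_v\ne1$ the group cannot be generated in the required way. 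I expect this step to be short once the quotient structure is written down. After it, by the stated hypothesis (if $g_u=1$ then $g_v\ne1$ and $g\ne1$) the symmetric case $g_u=1$ is vacuous or handled identically, so WLOG $g_v=1$ and $G=(\l g_u\r\times\l g\r){:}(\l u,v\r\times\l h\r)$.

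For Case~1 (reversing triple $(x,y,z)$), I would first show $h=1$ and hence $G=\l g,g_u\r{:}\l u,v\r$, by the same argument as in Lemma~\ref{lem:2-nil-2}: $\l h\r$ is a quotient of $G$ by a normal subgroup containing $[G,G]$ enough that $\l h\r$ would have to be generated by involutions, forcing $|h|$ (odd) to be $1$. Next, since $G/\l g,g_u\r\cong\l u,v\r$ is dihedral, two of $x,y,z$ are non-commuting involutions generating a Sylow $2$-subgroup; by Sylow and by Theorem~\ref{lem:G}/Theorem~\ref{Thm:AutoG} applied to $\Aut(G)$ (type~(III), with $\calL\cong\l\xi^2\r{:}\l\mu\r\times\l\t_{h_0}\r$, here $h_0=1$) I would normalise so that one generator is $u$ and another lies in $\l u,v\r$; the third generator must then be of the form $g_u^{a}g^{b}t$ with $t\in\l u,v\r$ an involution, and the generation condition $\l x,y,z\r=G$ forces suitable coprimality. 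Writing $z=u$ and chasing which involutions $t$ can appear (only $w^{2i}u$-type or $g^{k_2}v$-type survive, because $u$ must invert the $g_u$-part and the combination must invert $g$), I would arrive after an $\Aut(G)$-normalisation at the claimed form $(g_ug^{k_1}w^{2i}u,\ g^{k_2}v,\ u)$ with $\gcd(k_1,k_2)=1$; I would also check the parity condition (``$\ell_1$ odd'', presumably a typo for the order of some projection) needed for this to actually be a reversing rather than rotary configuration. Counting inequivalent triples is then $|\{\text{such raw triples}\}|/|\Aut(G)|$: the factor $6$ comes from the three ways to assign the roles of $x,y,z$ up to the $\RevMap$/duality symmetry, $2^{e-3}$ from the $\l\xi^2\r$-orbit count on choices of $w^{2i}$ inside $\D_{2^e}$, and the fraction $|\{(k_1,k_2):\gcd=1\}|/\varphi(|g|)$ from the residual action of $\Aut(\l g\r)$ on the pair $(k_1,k_2)$.

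For Case~2 (rotary pair $(a,z)$), by Sylow I normalise $z\in\l u,v\r$; since $G/\C_{\l u,v\r}(g_ug)$ must be cyclic and $\C_{\l u,v\r}(g_u g)=\C_{\l u,v\r}(g_u)\cap\C_{\l u,v\r}(g)=\l w^2,u\r$ (because $u$ centralises $g_u$ but $v$ does not, and both invert $g$), the only viable choice up to $\Aut(G)$ is $z=u$. Then $a=(g_ug)^{i}t h^{j}$ with $t\in\l u,v\r$, and $\l a,z\r=G$ forces $\gcd(i,|g_ug|)=1$, $\gcd(j,|h|)=1$, and $\l t,u\r$ a Sylow $2$-subgroup, so $t=w^{2k+1}$ or $t=w^{2k}v$; applying Theorem~\ref{Thm:AutoG} to reduce $i\mapsto1$, $h^j$ up to $\l\t_{h_0}\r$, and the $2$-part via $\l\xi^2\r{:}\l\mu\r$ collapses these to the two classes $(g_ugwh^\ell,u)$ and $(g_ugw^2vh^\ell,u)$. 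The count $2\varphi(|h|)/|\t_{h_0}|$ then follows exactly as in Lemma~\ref{lem:G-metac}(4) and Lemma~\ref{lem:2-nil-2}(2): $2$ for the two $2$-part classes, $\varphi(|h|)$ for the choices of $h^j$, divided by $|\t_{h_0}|$ for the automorphisms fixing everything but acting on $\l h\r$.

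The main obstacle I anticipate is the bookkeeping in the reversing-triple count of part~(1): correctly identifying which involutions of the dihedral $2$-group can serve as $x,y,z$ given the two \emph{different} inversion patterns of $u$ and $v$, and then pinning down the exact size of the $\Aut(G)$-orbits — in particular disentangling the $\l\xi^2\r$-action on the $w^{2i}$ parameter from the $\Aut(\l g\r)$-action on $(k_1,k_2)$ — so that the constants $6\cdot2^{e-3}$ and the ratio $|\{(k_1,k_2):\gcd(k_1,k_2)=1\}|/\varphi(|g|)$ come out precisely as stated. Verifying that these configurations give reversing (not bi-reversing or rotary) maps, i.e.\ checking the action of $z$ on the relevant face pair, is the other place where care is needed.
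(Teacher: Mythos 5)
Your two-case architecture (reversing triple versus rotary pair, Sylow normalisation of $z$, $\Aut(G)$-reduction of the parameters, then an orbit count) matches the paper's, but the step you describe as ``short'' --- establishing the dichotomy $g_u=1$ or $g_v=1$ --- is precisely where the proposal has a genuine gap. The quotient $G/(\l g_u,g_v,g\r\l h\r\l w^2\r)\cong\ZZ_2^2$ that you propose to use kills $g_u$, $g_v$ and $g$ outright, so tracking images of the generators there cannot distinguish the forbidden case (both $g_u\neq 1$ and $g_v\neq 1$) from the allowed ones: any generating rotary pair or reversing triple surjects onto that $\ZZ_2^2$ regardless. The paper instead derives the dichotomy \emph{inside} each case from a finer generation analysis. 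In the rotary case with $z=u$ and $a=g_ug_vg\,w^ih^{\ell}$ ($i$ odd) it computes $G'=\l[a,z]\r=\l g_u^2g^2w^2\r$, whose order is coprime to $|g_v|$; in the abelian quotient $G/G'$ the image of $g_v$ is then simultaneously centralized by $v$ (abelianness) and inverted by $v$ (the defining action), forcing $g_v=1$ since $|g_v|$ is odd, and symmetrically $z=v$ forces $g_u=1$. In the reversing case the same conclusion comes from computing $\l x,y,z\r$ modulo $\l g,g_u\r$ and observing that the quotient is already all of $\l u,v\r$, leaving no room for a nontrivial $g_v$. Some argument of this kind is indispensable, since the dichotomy is part of the assertion to be proved.

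A second error propagates through your sketch: in type (III) it is $v$ that centralizes $g_u$ and $u$ that inverts it, so $\C_{\l u,v\r}(g_ug)=\l w^2,v\r\cap\l w\r=\l w^2\r$, not $\l w^2,u\r$ as you wrote in Case~2 (and this contradicts your own Case~1 remark that ``$u$ must invert the $g_u$-part''). Getting these roles right is what drives the paper's classification of involutions: an involution $g_u^{i}g_v^{j}g^{k}w^{\ell}u$ must have trivial $g_u$-part when $\ell$ is odd and trivial $g_v$-part when $\ell$ is even, and $x,y$ must take one form of each parity (two of the same parity generate a proper subgroup); with the roles swapped this chase comes out wrong. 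Your counting heuristics for part~(1) are plausible but cannot be assessed until the above is repaired (note, for what it is worth, that the paper's own proof produces $2^{e-2}$ where the statement has $6\cdot 2^{e-3}$, so the constant there is the least reliable part of the lemma in any case).
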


\begin{proof}

{\bf Case 1.}
Suppose that $(x, y, z)$ is a reversing triple of $G$. 
Then the element $h$ of odd order must satisfy $h = 1$, because the quotient group $\l h \r \cong G / \l g_1, g_2, g_3, u, v \r$ is generated by involutions. 
This implies 
\[ G = (\l g_1 \r \times \l g_2 \r \times \l g_3 \r) {:} \l u, v \r, \]
and we proceed to classify the involutions of $G$.
Without loss of generality, let $t = g_u^i g_v^j g^k w^\ell u$ be an involution for integers $i, j, k, \ell$. 
If $\ell$ is odd, then conjugation by $w^\ell u$ maps $(g_u, g_v, g)$ to $(g_u, g_v^{-1}, g^{-1})$. Therefore,
\[ t^2 = g_u^i g_v^j g^k w^\ell u g_u^i g_v^j g^k w^\ell u = g_u^i g_v^j g^k \cdot g_u^{i} g_v^{-j} g^{-k} = g_u^{2i} = 1. \]
Since $|g_u|$ is odd, this forces $g_u^i = 1$. 
Thus, $t = g_v^j g^k w^\ell u$. 
Similarly, if $\ell$ is even, we derive $g_v^j = 1$ and $t = g_u^i g^k w^\ell u$. 
Additionally, the involution $w^{2^{e-2}}$ cannot belong to any reversing triple.

By Sylow’s theorem, we may assume $z \in \l u, v \r$, so 
\[ z = w^m u \]
for some integer $m$. 
Let 
\[ x = g_u^{i_1} g_v^{j_1} g^{k_1} w^{\ell_1} u,\  y = g_u^{i_2} g_v^{j_2} g^{k_2} w^{\ell_2} u, \]
where $i_1, j_1, k_1, \ell_1, i_2, j_2, k_2, \ell_2$ are integers. 
If both $\ell_1$ and $\ell_2$ are even, then $g_v^{j_1} = g_v^{j_2} = 1$. However, this leads to $\l x, y, z \r \leqslant \l g_u, g, u, v \r < G$, a contradiction. 
Similarly, both $\ell_1$ and $\ell_2$ being odd also yields a contradiction. 
Hence, after relabeling, we may take $\ell_1$ even and $\ell_2$ odd. 
This implies $g_v^{j_1} = 1$ and $g_u^{i_2} = 1$, simplifying to: 
\[ x = g_u^{i_1} g^{k_1} w^{\ell_1} u, \quad y = g_v^{j_2} g^{k_2} w^{\ell_2} u. \]
Since $\l x, y, z \r \leqslant \l g_u^{i_1}, g^{k_1}, w^{\ell_1}, u, g_v^{j_2}, g^{k_2}, w^{\ell_2} u, w^m u \r = G$, we require 
\[ \gcd(i_1, |g_u|) = \gcd(j_2, |g_v|) = \gcd(k_1, k_2) = 1. \]
By Theorem~\ref{lem:G}, 
\[ \Aut(G)_{(L)} \geqslant \Aut(H)  \text{ and }  \Aut(G)_L \cap \Aut(G)_{(H)} \geqslant \l \xi^2 \r {:} \l \mu \r. \]
Consequently, the triple $(x, y, z)$ is equivalent to either 
\[ (g_u g^{k_1} w^{\ell_1} u,  g_v g^{k_2} v, u) \text{ or } (g_u g^{k_1} w^{\ell_1} u,  g_v g^{k_2} v,  v), \]
as stated in part~(i). 

Now we verify whether the above triples generate $G$. 
First, consider the triple $(x, y, z) = (g_u g^{k_1} w^{\ell_1} u,\, g_v g^{k_2} v,\, u)$, where $\ell_1$ is even.
Using the conjugation relation $(g_u, g_v, g)^w = (g_u^{-1}, g_v^{-1}, g)$ and the condition $\gcd(k_1, k_2) = 1$, we compute:
\[
\begin{aligned}
	\l x, y, z \r &= \l g^{k_1},\, g^{k_2},\, g_u,\, w^{\ell_1},\, g_v w^{-1},\, u \r \\
	&= \l g, g_u,\, w^{\ell_1},\, g_v w^{-1},\, u \r.
\end{aligned}
\]
Observe that the quotient group
\[ G=\l x, y, z \r / \l g, g_u \r \cong \l w^{\ell_1},\, g_u w^{-1},\, u \r \cong \l u, v \r\] forces $g_v = 1$. 
A similar argument shows $g_u = 1$ for the cases that  $(x, y, z)$ is equivalent to $(g_u g^{k_1} w^{\ell_1} u,\, g_v g^{k_2} v,\, v)$. 

Conversely, if $g_v = 1$ (respectively $g_u = 1$), the triple $(g_ug^{k_1} w^{\ell_1} u,\,  g^{k_2} v,\, u)$ (respectively $(g_u g^{k_1} w^{\ell_1} u,\, g^{k_2} v,\, v)$) indeed generates $G$, by the argument above.

Thus, $G$ has a reversing triple $(x, y, z)$ if and only if one of the following holds:
\begin{enumerate}
	\item $g_v = 1$, and the triple is equivalent to $(g_u g^{k_1} w^{\ell_1} u,\, g^{k_2} v,\, u)$; \vskip0.1in
	\item $g_u = 1$, and the triple is equivalent to $(g^{k_1} w^{\ell_1} u,\, g_v g^{k_2} v,\, v)$,
\end{enumerate}
where $\gcd(k_1, k_2) = 1$, $\ell_1=2i$ is even.

The number of inequivalent reversing triples is given by:
\[
\frac{|\{(k_1, k_2) \mid \gcd(k_1, k_2) = 1\}|}{\varphi(|g|)} \cdot 2^{e-2}.
\]

{\bf Case 2.}
Suppose that $(a,z)$ is a rotary pair of $G$. 
By Sylow's theorem, we may assume that $z \in \langle u,v \rangle$, so that
\[ z = w^m u, \]
for some integer $m$. 
Without loss of generality, 
\[ a = g_u^i g_v^j g^k t h^{\ell}, \]
where $t \in \langle u, v \rangle$, and $i, j, k, \ell$ are integers. 
These integers $i, j, k, \ell$ and the element $t$ satisfy  
\[ \gcd(i, |g_u|) = \gcd(j, |g_v|) = \gcd(k, |g|) = \gcd(\ell, |h|) = 1 \ \text{and}\ \langle t, \rho^m u \rangle = \langle u, v \rangle, \]
so that $\langle a, z \rangle \leqslant \langle g_u^i, g_v^j, g^k, t, w^m u, h^{\ell} \rangle = G$. 
Here by Theorem~\ref{lem:G}, 
\[\Aut(G)_{(L)}\geqslant \Aut(H)\text{ and }\Aut(G)_{L} \cap \Aut(G)_{(H)}\geqslant \langle \xi^2 \rangle {:} \langle \mu \rangle, \]
where $H=\l g_ug_vg\r$ and $L=\l u,v\r\times \l h\r$.
The pair $(a, z)$ is equivalent to 
\[ (g_u g_v g t h^{\ell}, z), \]
where $\gcd(\ell, |h|) = 1$ and $(t,z)\in \{(w^i,u),(w^i,v),(w^{2i}v,u),(w^{2i}u,v)\}$ with $i$ being an odd integer. 

Next, we verify whether these pairs generate $G$. 
The key observation is that $G' = \l [a, z] \r$, which holds because 
\[ \l [a, z] \r \char \l g_u g_v g \r \times \l w^2 \r \lhd G. \]

Suppose $(a, z) = (g_u g_v g w^i h^\ell, u)$ with $i$ odd. 
Then $G' = \l [a, z] \r = \l g_u^2 g^2 w^2 \r$, and the abelian quotient 
\[ G/G' \cong \l g_v \r \times \l u, v \r / \l w^2 \r \times \l h \r. \]
This implies $g_v^v = g_v$ since $\gcd(|G'|, |g_v|) = 1$. 
From the definition of $g_v$, it follows that $g_v = 1$. 
Similarly, if $z = v$, then $g_u = 1$ must hold.

Conversely, if $g_u = 1$ (respectively $g_v = 1$), the pairs $(g_v g w^i h^\ell, u)$ and $(g_v g w^{2i} v h^\ell, u)$ (respectively $(g_u g w^i h^\ell, v)$ and $(g_u g w^{2i} u h^\ell, v)$) generate $G$ by the argument above.

In summary, $G$ admits a rotary pair $(a, z)$ if and only if one of the following holds:
\begin{enumerate}
	\item $g_v = 1$, and $(a, z)$ is equivalent to $(g_u g w h^\ell, u)$ or $(g_u g w^2 v h^\ell, u)$;
	\vskip0.1in
	\item $g_u = 1$, and $(a, z)$ is equivalent to $(g_v g w h^\ell, v)$ or $(g_v g w^{2} u h^\ell, v)$;
\end{enumerate}
Furthermore, there are $\frac{2 \varphi(|h|)}{|\tau_{h_0}|}$ inequivalent rotary pairs in $G$.
\end{proof}

\section{Groups of types~(IV)-(V)}

In this section, we assume that $G$ is a group of type (IV) or type (V).

We analyse the two cases separately.
\subsection{Groups of type {\rm (IV)}}
The first lemma treats groups of type~IV.
\begin{lemma}\label{lem:not-2-nil-1}
Let $G=(\l g\r\times \l u,v\r){:}\l h\r=(\ZZ_m\times\ZZ_2^2){:}\ZZ_{3^fn}$ with $G/\O_{2'}(G)=\A_4$, where $m,n,6$ are pairwise coprime.
Then $\calM$ is $G$-vertex-rotary with a rotary pair $(a,z)$, and
\begin{enumerate}[{\rm(1)}]
\item $G=\l u,v\r{:}\l h\r=\ZZ_2^2{:}\ZZ_{3\lambda}=\ZZ_\lambda.\A_4$, and\vskip0.1in
\item $a=h^i$ with $\gcd(i,3\lambda)=1$, and $z\in\{u,v,uv\}$, and\vskip0.1in
\item the underlying graph of $\calM$ is $\Ga=\K_4^{(\lambda)}$.
\end{enumerate}
\end{lemma}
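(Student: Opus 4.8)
The plan is to run an entirely group-theoretic argument; the standing hypothesis $\gcd(\chi(\calM),|E|)=1$ will not actually be needed. Two structural facts drive everything, and I would record them first. First, for a group of type~(IV) one has $\O_{2'}(G)=\l g,h^3\r$ (this subgroup is normal, odd, and the quotient is $\ZZ_2^2{:}\ZZ_3$), so $G/\O_{2'}(G)\cong\A_4$ always. Second, $\O_2(G)=\l u,v\r\lhd G$: indeed $h$ maps $\l u,v\r$ onto itself (it acts as the $3$-cycle $(u,v,w)$ by the defining relation $(u,v,w)^h=(v,w,u)$), $g$ centralises $\l u,v\r$, and $u,v$ act by inner automorphisms.

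\textbf{Step 1 (vertex-rotary).} Every involution of $\A_4$ lies in $\O_2(\A_4)\cong\ZZ_2^2$, so $\A_4$ is not generated by involutions; hence neither is $G$. Consequently $G$ admits no regular triple and no reversing triple, so $\calM$ is neither $G$-regular nor $G$-vertex-reversing. Therefore $\calM$ is $G$-arc-regular and $G$-vertex-rotary, with a rotary pair $(a,z)$, $\l a,z\r=G$.

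\textbf{Step 2 (reduction to $G=\ZZ_2^2{:}\ZZ_{3\lambda}$).} Every involution of $G$ maps into $\O_2(\A_4)$, hence lies in $\O_2(G)=\l u,v\r$; after an inner automorphism carrying a Sylow $2$-subgroup onto $\l u,v\r$, I take $z\in\{u,v,uv\}$. Pass to $\bar G:=G/\l u,v\r\cong\l g\r{:}\l h\r$. Since $\bar z=1$, the relation $\l a,z\r=G$ forces $\l\bar a\r=\bar G$, so $\bar G$ is cyclic. But if $g\neq1$, indecomposability of $G$ forces $[g,h]\neq1$ (otherwise, as $\gcd(|g|,|G/\l g\r|)=1$, the subgroup $\l g\r$ splits off as a direct factor), making $\bar G$ non-cyclic --- a contradiction. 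Hence $g=1$ and $G=\l u,v\r{:}\l h\r\cong\ZZ_2^2{:}\ZZ_{3^fn}$; here $\l h^3\r$ centralises both $\l u,v\r$ and $\l h\r$, so $\Z(G)=\l h^3\r\cong\ZZ_\lambda$ with $\lambda=3^{f-1}n$ and $G/\Z(G)\cong\A_4$. (Indecomposability in fact also forces $n=1$, though this is not needed below.) This is part~(1).

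\textbf{Step 3 (the rotary pair and the graph).} Write $a=th^i$ with $t\in\l u,v\r$; since $\l\bar a\r=\l h\r$ one needs $\gcd(i,|h|)=1$, so $h^i$ permutes the three involutions of $\l u,v\r$ as a $3$-cycle. Then $a^3=t\cdot t^{h^{-i}}\cdot t^{h^{-2i}}\cdot h^{3i}=h^{3i}$, because the product of the three involutions of $\ZZ_2^2$ is trivial; and $h^{3i}$ generates $\Z(G)$. Hence $\Z(G)\leq\l a\r$ and the image of $\l a\r$ in $G/\Z(G)\cong\A_4$ has order $3$, so $|a|=3\lambda$ and $[G:\l a\r]=4$. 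Conjugating $a$ by a suitable $s\in\l u,v\r$ --- possible since $\{s\,s^{h^{-i}}:s\in\l u,v\r\}=\l u,v\r$ --- sends $a\mapsto h^i$ while fixing $z$ (as $\l u,v\r$ is abelian), so $(a,z)$ is equivalent to $(h^i,z)$ with $z\in\{u,v,uv\}$ and $\gcd(i,3\lambda)=1$; this is part~(2). Finally $\Ga=\Cos(G,\l h\r,\l z\r)$ has $4$ vertices and valency $|h|=3\lambda$; since $\l h\r$ is not normal in $G$ (e.g. $u$ does not normalise it), $\Core_G(\l h\r)=\Z(G)$, so $G$ acts on $V$ as $\A_4$ in its natural $2$-transitive action. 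A connected, edge-transitive graph on $4$ vertices whose automorphism group is $2$-transitive on vertices has underlying simple graph $\K_4$, and by arc-transitivity every edge-multiplicity equals $|E|/6=\lambda$; hence $\Ga\cong\K_4^{(\lambda)}$, proving part~(3).

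\textbf{Expected obstacle.} The delicate point is Step~2 --- extracting $g=1$ and the precise central-extension shape of $G$ cleanly from indecomposability together with the coprimality hypotheses --- and then making sure the identifications used in Step~3 are realised by genuine elements of $\Aut(G)$. The commutator computation and the $2$-transitivity claim are routine once the setup is in place.
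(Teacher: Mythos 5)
Your argument is correct and follows essentially the same route as the paper's: rule out generation by involutions via the quotient $\A_4$ (the paper phrases this as $G/\C_G(\langle u,v\rangle)\cong\ZZ_3$) to conclude vertex-rotary, force $g=1$ from the cyclicity of $G/\langle u,v\rangle$ together with indecomposability, and then read off the rotary pair and the coset graph $\K_4^{(\lambda)}$. You merely supply more detail than the paper does — the explicit computation $a^3=h^{3i}$ and the conjugation carrying $a$ to $h^i$ — where the paper simply asserts that $a$ is conjugate to a generator of $\langle h\rangle$.
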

\begin{proof}
Let $N=\l u,v\r=\ZZ_2^2$, and let $C=\C_G(N)$.
Then $G/C\leq\Aut(N)$, and hence $G/C=\ZZ_3$.
It follows that $\calM$ is $G$-vertex-rotary, and $\l g\r{:}\l h\r\cong G/N$ is of odd order and generated by a single element.
Thus $g=1$, and $G=N{:}G_{2'}=\l u,v\r{:}\l h\r\cong\ZZ_\lambda.\A_4$, with $\lambda=|h|/3$, as in part~(1).

Let $(a,z)$ be a rotary pair for $G$.
Then $z\in\{u,v,uv\}$, and $a$ is conjugate to a generator of $\l h\r$.
Without loss of generality, we may let $a=h^i$, with $\gcd(i,|h|)=1$, as in part~(2).

Finally, the vertex stabilizer $G_\a=\l a\r\cong\ZZ_{3\lambda}$, and $|G:G_\a|=4$.
Thus $\lv V \rv=4$, and the kernel $K=G_{(V)}=\ZZ_\lambda$, and so $\Ga=\K_4^{(\lambda)}$, as in part~(3).
\end{proof}

\subsection{Groups of type {\rm (V)}}
%
\begin{lemma}\label{lem:set up}
	Let $G=(\l g\r\times\l w^2,v\r){:}((\l h\r{:}\l u\r)\times\l c\r)$, where $\l u,v\r=\D_8$, $\l h\r$ is a Sylow $3$-subgroup, both $\l g\r$ and $\l c\r$ are Hall subgroups such that $\C_{\l c\r}(g)\leqslant\Phi(\l c\r)$, $(w^2v,v,w^2)^h=(v,w^2,w^2v)$ and $h^u=h^{-1}$.
	
	Assume that $G$ is indecomposable, and $G$ has a rotary pair or a reversing triple.
	Then $g^{u}=g^{-1}$ and $[g,h]=1$.
\end{lemma}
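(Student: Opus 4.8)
The plan is to extract the two relations $g^u=g^{-1}$ and $[g,h]=1$ from the hypothesis that $G$ admits a rotary pair $(a,z)$ or a reversing triple $(x,y,z)$, using indecomposability of $G$ together with the structural description of $G$ in type~(V) of Theorem~\ref{lem:G}. First I would record the relevant centralizer data: since $\l g\r$ is a Hall subgroup normal in $G$ (it is a direct factor of the Fitting-type subgroup $\l g\r\times\l w^2,v\r$ on which the complement acts), the conjugation action of $\l h\r{:}\l u\r$ on $\l g\r$ is via automorphisms, while $\l c\r$ acts with $\C_{\l c\r}(g)\leqslant\Phi(\l c\r)$ as given. Write $C=\C_G(\l g\r)$; then $G/C$ embeds in $\Aut(\l g\r)$, which is abelian since $\l g\r$ is cyclic of order coprime to $6$. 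So the first key step is: $G/C$ is abelian, hence $G'\leqslant C$, i.e. $[g,G']=1$.

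The second step is to pin down where $h$ and $u$ sit relative to $C$. Because $\l h\r{:}\l u\r$ contains the Sylow $3$-subgroup $\l h\r$ and an involution $u$ inverting it, and because $|g|$ is coprime to $|h|$ and $|c|$, the only obstructions to $G$ decomposing as a direct product are the ``genuine'' actions on $\l g\r$. If $[g,h]\neq1$, then $h$ acts nontrivially on $\l g\r$; but $h$ has $3$-power order, so it would induce an automorphism of $3$-power order on $\l g\r$, and one checks that $\l g\r_{3'}$-parts and the $\l h\r$-action can be split off, contradicting indecomposability — more precisely, I would argue that the subgroup $\l g^{\l h\r}\r\times(\text{stuff commuting with }h)$ yields a proper direct decomposition unless $[g,h]=1$. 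This is essentially the same splitting argument used repeatedly in the proofs of Lemmas~\ref{lem:G-metac}, \ref{lem:2-nil-2}, and \ref{lem:2-nil-reversing} (``if $u$ centralizes a Sylow subgroup then the quotient is cyclic and $G$ splits''). So $[g,h]=1$ follows from indecomposability alone, \emph{provided} $G$ is not already forced to decompose — and here the hypothesis that $G$ has a rotary pair or reversing triple is what guarantees $G$ is generated by a pair/triple of the special form, keeping it indecomposable and ruling out the degenerate configurations.

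For $g^u=g^{-1}$: once $[g,h]=1$, the action of the $2$-part on $\l g\r$ is what remains. The Sylow $2$-subgroup is $\l u,v\r=\D_8$, and $v$ already acts trivially on $g$ by the defining relations of type~(V) (the relations given only involve $v$ and $w^2=(uv)^2$ acting on $\l w^2,v\r$, not on $\l g\r$); so $g^v=g$ and the $\l u,v\r$-action on $\l g\r$ factors through $\l u,v\r/\l v\r^{G}$, a group of order $2$ generated by the image of $u$. Thus $g^u=g^{\pm1}$. If $g^u=g$, then $u$ centralizes $\l g\r$, so $\l g\r\times\l h\r\times\l c\r$ together with $\l h\r{:}\l u\r$-structure splits $\l g\r$ off as a direct factor (again because $\gcd(|g|,6|c|/\gcd)=1$ and $\C_{\l c\r}(g)\leqslant\Phi(\l c\r)$ only prevents splitting the $\l c\r$-part, not the rest), contradicting indecomposability. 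Hence $g^u=g^{-1}$.

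The main obstacle I anticipate is the bookkeeping in the second and fourth steps: ruling out $[g,h]\neq1$ and $g^u=g$ cleanly requires exhibiting an honest proper direct-product decomposition of $G$, and one must be careful that the condition $\C_{\l c\r}(g)\leqslant\Phi(\l c\r)$ (which blocks a $\l c\r$-splitting) does not secretly also depend on the $u$- or $h$-action. The safest route is to use part~(3) of Lemma~\ref{(chi,|A|)=1} style reasoning together with Theorem~\ref{Thm:AutoG}'s description of $\Aut(G)$ for type~(V): the very existence of the stated automorphism group structure already encodes $g^u=g^{-1}$ and $[g,h]=1$ in the case distinction ``if $[h,g]=1$'' versus ``if $[h^3,g]=1$'', and one shows the second alternative is incompatible with indecomposability plus the existence of a rotary pair or reversing triple. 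I would present the argument in the order: (i) $G/C_G(\l g\r)$ abelian, so $g^v=g$; (ii) reduce the $2$-action to $\l u\r$ and the $2'$-odd action to $\l h\r,\l c\r$; (iii) if $g^u=g$ then decompose, contradiction, giving $g^u=g^{-1}$; (iv) if $[g,h]\neq1$ then decompose, contradiction, giving $[g,h]=1$ — invoking in (iii) and (iv) that a rotary pair/reversing triple generates $G$ so the relevant quotients cannot be split off trivially.
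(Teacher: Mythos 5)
There are genuine gaps in both halves. For $g^u=g^{-1}$: you assert that the $\l u,v\r$-action on $\l g\r$ gives ``$g^u=g^{\pm1}$'', but an involution in $\Aut(\ZZ_n)$ for composite $n$ need not be $\pm1$ (e.g.\ $x\mapsto x^4$ on $\ZZ_{15}$), so the statement has to be proved one Sylow subgroup $\l g_p\r$ of $\l g\r$ at a time, and the dangerous case is the mixed one in which $u$ centralizes $\l g_p\r$ for some primes $p$ and inverts it for others. In that mixed case indecomposability alone does not suffice: $\l c\r$ may act nontrivially on several $\l g_p\r$ at once and tie them to the rest of the group, so no direct factor can be exhibited. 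This is exactly where the hypothesis of a rotary pair or reversing triple must enter, and your proposal only alludes to it (``the relevant quotients cannot be split off trivially'') without ever deploying it. The paper's actual argument is: if $[g_p,u]=1$, then $\l g_p\r{:}\l c\r$ must be non-abelian (otherwise $g_p$ is central and $G=\l g_p\r\times G_{p'}$, contradicting indecomposability); but the odd-order quotient $G/\l g_{p'},u,v,h\r\cong\l g_p\r{:}\l c\r$ is generated by the images of $a,z$ (or of $x,y,z$), and every involution dies in an odd-order quotient, so this quotient is cyclic (or trivial) --- a contradiction. Your proof never constructs this quotient, which is the heart of the lemma.

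For $[g,h]=1$ your splitting argument is invalid: a nontrivial coprime action of a $3$-element on a cyclic group does not produce a direct decomposition ($\ZZ_7{:}\ZZ_3$ is indecomposable), so ``$h$ induces an automorphism of $3$-power order, hence the $\l h\r$-action can be split off'' establishes nothing. Ironically, you already have the correct ingredient in your step (i): $G/\C_G(\l g\r)$ is abelian, and $h^u=h^{-1}$ gives $h^{-2}=[h,u]\in G'$, hence $h\in\l h^2\r\leqslant G'\leqslant\C_G(\l g\r)$ since $|h|$ is odd. (Equivalently, as the paper puts it: if $h\notin C:=\C_G(g)$ then $\l hC,uC\r$ would be a non-abelian subgroup of the abelian group $G/C$.) This half needs neither indecomposability nor the rotary pair, but it does need an argument you did not give.
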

\begin{proof}

Suppose that $\l g_p\r$ be a Sylow $p$-subgroup of $\l g\r$ such that $[g_p,u]=1$.
Then $\l g_p\r{:}\l c\r$ is non-abelian, otherwise, $G=\l g_p\r\times G_{p'}$, contradiction $G$ being indecomposable.
Thus the odd order  factor group 
\[\overline{G}:=G/\l g_{p'},u,v,h\r\cong \l g_p\r{:}\l c\r\]
is non-abelian.

Let $(a,z)$ be a rotary pair and $(x,y,z)$ be a reversing triple, and so either $G=\l a,z\r$ or $G=\l x,y,z\r$.
By Sylow's theorem, we assume that $z\in \l u,v\r$.
It follows that the factor group $\overline{G}$ is either cyclic or dihedral, contradiction.
Therefore, $g_p^w=g_p^{-1}$ for each $p\in \pi(|g|)$, and so $g^{u}=g^{-1}$.
Let $C=\C_G(g)$.
The abelian group 
\[G/C\geqslant \l uC \r.\]
Hence $h\in C$, equivalently $[g,h]=1$, otherwise $\l hC,uC\r$ is a non-abelian subgroup of $G/C$.
\end{proof}
Before treating the groups of type~(V), we give a typical construction for rotary pairs in this case, which is built upon the symmetric group $G=\Sym\{1,2,3,4\}=\S_4$, and a rotary pair $((123),(14))$.

\begin{example}\label{cons:Rota-(V)}
{\rm
We construct rotary pairs in some groups of type~(V).
\begin{enumerate}[{\rm (1)}]
\item Let $G=\Sym\{1,2,3,4\}=\l u,v\r{:}(\l h_0\r{:}\l w\r)=\ZZ_2^2{:}\S_3=\S_4$, where $u=(12)(34)$, $v=(13)(24)$, $w=(14)$ and $h_0=(124)$.
Let
\[(a_1,z_1)=((123),(14)).\]

\item Let $X=\l u,v\r{:}(\l h\r{:}\l w\r)=\ZZ_2^2{:}\D_{2.3^f}=\l h^3\r.G=\ZZ_{3^{f-1}}.\S_4$, where $\ZZ_2^2=\l u,v\r<\l u,w\r=\l v,w\r=\D_8$, $|h|=3^f$, and $(u,v,uv)^h=(v,uv,u)$, and $h^w=h^{-1}$.
Let
\[\mbox{$(a_2,z_2)=(h,w^u)$.}\]
Noticing that $h^u=uhu=hvu$ and $(h^u)^w(h^u)^{-1}=h^{-2}$, we have that $h\in\l h^u,w\r$, and hence $\l h^u,w\r=X$.
So $(a_2,z_2)=(h^u,w)^u$ is a rotary pair for $X$.

\item Let $Y=\l g\r{:}X=(\l g\r\times \l u,v\r){:}(\l h\r{:}\l w\r)=(\ZZ_m\times\ZZ_2^2){:}\D_{2.3^f}$, where $|g|=m$ is coprime to 3, $gh=hg$ and $g^w=g^{-1}$.
    Let
    \[\mbox{$(a_3,z_3)=(ga_2,z_2)=(gh,w^u)$.}\]
    Then $\l a_3,z_3\r=\l gh,z_2\r=\l g,h,z_2\r=\l g\r{:}\l h,z_2\r=Y$, so that $(a_3,z_3)$ is a rotary pair for the group $Y$.

\item Let $Z=Y{:}\l c\r=(\l g\r\times \l u,v\r){:}((\l h\r{:}\l w\r)\times \l c\r)=(\ZZ_m\times\ZZ_2^2){:}(\D_{2.3^f}\times\ZZ_n)$, where $[h,Y]=1$ and $[\l g\r,\l c\r]\le\Phi(\l g\r)$.
    Let
    \[\mbox{$(a_4,z_4)=(a_3c,z_3)=(ghc,w^u)$.}\]
    Then $\l a_4,z_4\r=\l c,gh,z_3\r=Y{:}\l c\r=Z$, so $(a_4,z_4)$ is a rotary pair for $Z$.
    \qed
\end{enumerate}
}
\end{example}

The following lemma determines rotary pairs and reversing triples in the groups of type~(V).
Let
\[\mbox{$G=(\l g\r\times\l w^2,v\r){:}((\l h\r{:}\l u\r)\times\l c\r)$,}\]
a group of type~(V).
We first consider the subgroup $\l w^2,v\r{:}\l h,u\r$ of $G$.

\begin{lemma}\label{th-w}
	Following notations above,  $t\in\l w^2,v\r$ is such that $\l t h,u\r=\l w^2,v\r{:}\l h,u\r$ if and only if $t=v$ or $w^2$.
\end{lemma}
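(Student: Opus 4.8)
The plan is to analyze which elements $t\in\l w^2,v\r$, together with $u$, generate the group $Q:=\l w^2,v\r{:}\l h,u\r$. Recall that $\l u,v\r=\D_8$ with $w=uv$, so $\l w^2,v\r=\{1,v,w^2,w^2v\}\cong\ZZ_2^2$ is the unique Klein four subgroup of $\D_8$ which is not $\l u,w^2\r$; note that $w^2$ is the central involution of $\D_8$. By the defining relations, $\l h\r$ acts on $\l w^2,v\r$ by the $3$-cycle $(w^2v,v,w^2)^h=(v,w^2,w^2v)$, and $h^u=h^{-1}$, so $Q\cong \ZZ_2^2{:}\D_{2\cdot 3^f}$ where $|h|=3^f$; in particular $Q/\Phi(Q)\cong\ZZ_2^2{:}\S_3=\S_4$ modulo the normal $3$-subgroup $\l h^3\r$. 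First I would dispose of the ``only if'' direction: I must show $t=1$ and $t=w^2v$ both fail. If $t=1$ then $\l th,u\r=\l h,u\r=\D_{2\cdot 3^f}$, which is proper in $Q$ since it misses $\l w^2,v\r$. If $t=w^2v$, I claim $\l w^2v\cdot h,\,u\r$ is also proper. The cleanest way is to pass to the quotient $\bar Q:=Q/\l h^3\r\cong \S_4$, where $\bar h$ is a $3$-cycle and $\bar u$ is a transposition; under the identification of Example~\ref{cons:Rota-(V)}(1) I can take $\l \bar w^2,\bar v\r$ to be the normal Klein four subgroup $\{e,(12)(34),(13)(24),(14)(23)\}$ with $\bar w^2=(12)(34)$ the appropriate central-type element, $\bar h=(124)$, $\bar u=(12)$ or similar, and then $\bar w^2\bar v$ is the third double transposition; a direct check shows $\l \bar w^2\bar v\cdot\bar h,\bar u\r$ has order dividing $12$, hence is proper. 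Since $\Phi(Q)\le\l h^3\r$ (as $\l h^3\r$ is the Frattini-type normal subgroup here) and $t\in\l w^2,v\r$, properness downstairs forces properness in $Q$.

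Next I would establish the ``if'' direction, namely that $t=v$ and $t=w^2$ each give $\l th,u\r=Q$. The key structural fact is that $Q$ is generated by $\l h\r$, $\l u\r$ and $\l w^2,v\r$, and since $h$ acts transitively (via the $3$-cycle) on the three nonidentity elements of $\l w^2,v\r$, it suffices to show that the subgroup $H:=\l th,u\r$ contains at least one nonidentity element of $\l w^2,v\r$ and contains $h$ and $u$; then $H\supseteq \l w^2,v\r$ and hence $H=Q$. For $t=w^2$: consider the commutator-type element $(w^2h)^2$. Using $h$-action $(w^2)^{h^{-1}}=w^2v$ (reading the $3$-cycle $(w^2v,v,w^2)\mapsto(v,w^2,w^2v)$ backwards), compute $w^2 h w^2 h = w^2\cdot (w^2)^{h^{-1}}\cdot h^2 = w^2\cdot w^2v\cdot h^2 = v h^2$, so $(w^2h)^2=vh^2\in H$, and then $(w^2h)^3 = (w^2h)\cdot vh^2$; I would push this computation one more step to extract a nontrivial element of $\l w^2,v\r$, or alternatively observe $(w^2h)^{|h|}$ or a suitable power lands in $\l w^2,v\r\setminus\{1\}$ since $|w^2h|$ is even (as $w^2h$ maps to an element of even order in $Q/\l w^2,v\r\cong\D_{2\cdot3^f}$... wait, that quotient is dihedral so $w^2h$ could have order $2\cdot3^f$ there). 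More robustly: $H$ contains $u$, and $H\ni th\Rightarrow H\ni (th)(th)^u\cdots$; since $u$ inverts $h$, $(th)^u = t^u h^{-1}$ where $t^u\in\l w^2,v\r$ (as $\l w^2,v\r\lhd\D_8$), so $(th)(th)^u = t\, h\, t^u h^{-1} = t\,(t^u)^{h^{-1}}\in \l w^2,v\r$. Thus I get an explicit element $s:=t\,(t^u)^{h^{-1}}\in\l w^2,v\r\cap H$, and I just need $s\neq1$, i.e. $(t^u)^{h^{-1}}\neq t$. For $t=v$ or $t=w^2$ this is a finite check using the $\D_8$-action of $u$ and the $3$-cycle action of $h$; the point is that $u$ and $h^{-1}$ act ``incompatibly'' on these two elements. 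Once $s\ne1$, then $s\in H$, and since $h\in H$? — not yet, only $th$ and $u$. So from $s\in\l w^2,v\r\cap H$ and $th\in H$ I get $h = s^{-1}\cdot(s\cdot t^{-1})\cdot(th)$... I need $t\in\l s\r$-orbit; since $\l w^2,v\r$ is elementary abelian of rank $2$ and $s\ne1$, if also $t\ne s$ I'd have all of $\l w^2,v\r$ immediately. I would organize this as: show $\{1\ne s\in H\cap\l w^2,v\r\}$, then $h=(s')^{-1}(th)\in H$ where $s'\in\l w^2,v\r$ with $s't=\,$(something in $H$), forcing $\l w^2,v\r\subseteq H$ by $h$-transitivity, whence $H=Q$.

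I expect the main obstacle to be the bookkeeping in the ``if'' direction: correctly tracking the $\D_8$-conjugation action of $u$ on $v$ and $w^2$ (note $u$ centralizes $w^2$ since $w^2$ is central in $\D_8$, but $v^u = v w^2$ or $v^u=v$ depending on conventions — here $\l u,v\r=\D_8$ with $w=uv$ of order $4$, so $v^u = uvu$; since $(uv)^2=w^2$ is central, $uvu = v^{-1}w^2\cdot$something — one must fix this precisely) together with the $3$-cycle action of $h$, and verifying the non-equality $(t^u)^{h^{-1}}\neq t$ for the two good values of $t$ while confirming it can fail for $t=w^2v$. A secondary subtlety is justifying $\Phi(Q)\le\l h^3\r$ (or more precisely that any proper subgroup containing $u$ and a coset representative is detected in $Q/\l h^3\r\cong\S_4$) so that the quotient argument in the ``only if'' direction is legitimate; this should follow from the structure of $Q$ as $\l h^3\r.\S_4$ with $\l h^3\r$ a normal $3$-group on which the complement acts, but I would state it carefully. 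The calculations themselves are short, four-element and $3$-cycle manipulations, so the risk is purely clerical rather than conceptual.
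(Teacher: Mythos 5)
Your overall strategy coincides with the paper's: both proofs turn on the observation that $u$ inverts $th$ exactly when $t^u=t^h$, i.e.\ exactly when $t\in\{1,w^2v\}$, which settles the ``only if'' direction (your detour through $Q/\l h^3\r\cong\S_4$ and the worry about $\Phi(Q)\leqslant\l h^3\r$ are unnecessary: if the image of $\l th,u\r$ in \emph{any} quotient is proper, then $\l th,u\r$ is proper, with no Frattini-type justification needed). Your element $s=(th)(th)^u=t\,(t^u)^{h^{-1}}$ is a legitimate substitute for the paper's computation $(w^2h)^2u^{w^2h}=w$, and checking $s\neq 1$ for $t\in\{v,w^2\}$ is the same finite verification as in the first part.

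The genuine gap is in how you close the ``if'' direction. Having produced $1\neq s\in H\cap\l w^2,v\r$ with $H=\l th,u\r$, you propose to conclude $\l w^2,v\r\subseteq H$ ``by $h$-transitivity'' --- but $h\in H$ is not yet known at that point; and your fallback identity $h=(s')^{-1}(th)$ requires $s'=t\in H$, which is exactly what has not been established. Nor can it be read off from $s$ alone: with $v^u=w^2v$, $(w^2)^u=w^2$, $(w^2)^{h^{-1}}=v$ and $(w^2v)^{h^{-1}}=w^2$, one computes $s=w^2v$ for \emph{both} $t=v$ and $t=w^2$, so $s\neq t$ in both cases and the shortcut ``$t=s$'' is unavailable. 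The repair is one line but must be said: $u\in H$ normalizes $\l w^2,v\r$ and $s^u=(w^2v)^u=v\neq s$, so $H\supseteq\l s,s^u\r=\l w^2,v\r$; then $t\in H$, hence $h=t^{-1}(th)\in H$, and $H\supseteq\l w^2,v\r{:}\l h,u\r$. With that insertion your argument is complete and amounts to the paper's proof in slightly different packaging.
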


\begin{proof}
	From $(w^2v,v,w^2)^h=(v,w^2,w^2v)$ and $h^u=h^{-1}$, we have:
	\[
	\l t h,u\r=\D_{4\cdot3^e} \Longleftrightarrow (th)^u=(th)^{-1} \Longleftrightarrow t^uh^{-1}=h^{-1}t \Longleftrightarrow t^u=t^h \Longleftrightarrow t\in\{1,w^2v\}.
	\]
	Since $\D_{4\cdot3^e}$ is a maximal subgroup of $\l w^2,v\r{:}\l h,w\r\cong \ZZ_{3^{e-1}}.\S_4$, we have that $\l t h,u\r=\l w^2,v\r{:}\l h,w\r$ if and only if
	\[
	t \in \l u,v\r {\setminus}\{1,w^2v\}=\{v,w^2\}.
	\]
	
	Suppose that $t=w^2$.
	Then $(w^2h)^2u^{w^2h}=w^2vh^{2}h^{-2}u=w$, and so $\l w^2h,u\r=\l w^2,v\r{:}\l h,u\r$.
	Similarly, it is straightforward to check that $\l vh,u\r=\l w^2,v\r{:}\l h,u\r$.
\end{proof}

Now, we are ready to classify rotary pairs and reversing triples of groups of type V.

\begin{lemma}\label{lem:Rev-(V)}
	Following notations in Lemma~\ref{lem:set up}.
	Assume that $\calM$ is $G$-vertex-reversing with a reversing triple $(x,y,z)$, and 
	let $t$, $t'\in \l u,v\r$, $i$ be an integer such that $\gcd(i,|gh|)=1$. 
	Then $c=1$, and relabeling if necessary, 
	the reversing triple $(x,y,z)$ is equivalent to one of the following triples:
	\begin{enumerate}
		\item $(t (gh)^i u, t', u)$, where $t'\neq 1$;\vskip0.1in
		\item $(t (gh)^i u, g^{i'} h^{3j_0} w^2u, u)$;\vskip0.1in
		\item $(t (gh)^i u, g^{i'} t' h^{j'} u, u)$, where $(t, t')\neq (1, 1)$
	\end{enumerate}
\end{lemma}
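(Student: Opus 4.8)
The strategy is to follow the same template used in the proofs of Lemmas~\ref{lem:2-nil-2} and~\ref{lem:2-nil-reversing}: first pin down which Hall factors of $G$ must be trivial when a reversing triple exists, then normalise $z$ using Sylow's theorem, then parametrise $x$ and $y$ by writing them in the coordinates afforded by the decomposition $G=(\l g\r\times\l w^2,v\r){:}((\l h\r{:}\l u\r)\times\l c\r)$, and finally reduce modulo $\Aut(G)$ using the description of $\calL$ in Theorem~\ref{Thm:AutoG}(V). By Lemma~\ref{lem:set up} we already know $g^u=g^{-1}$ and $[g,h]=1$, so $\l g,h\r$ is cyclic and $u$ inverts it; this is what makes the elements $(gh)^i$ appear.

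\textbf{Step 1: $c=1$.} Since $\l c\r$ has odd order and the quotient $G/\l g,w^2,v,h,u\r\cong\l c\r$ would have to be generated by the images of the three involutions $x,y,z$, it must be trivial. Hence $c=1$ and $G=(\l g\r\times\l w^2,v\r){:}(\l h\r{:}\l u\r)$, a group of the shape treated in Lemma~\ref{th-w} (with the subgroup $\l w^2,v\r{:}\l h,u\r$ being almost all of $G$, the extra part being $\l g\r$).

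\textbf{Step 2: normalise $z$ and analyse the "$u$-coset" structure.} By Sylow's theorem we may assume $z\in\l u,v\r=\D_8$; since $\l g,h\r$ is inverted by every involution outside $\l w^2,v\r$ (i.e.\ by the elements of the coset $\l w^2,v\r u$), while involutions in $\l w^2,v\r$ centralise $\l g,h\r$, I will split the involutions of $G$ into two families: those of the form $g^k h^j t$ with $t\in\l w^2,v\r$ (centralising $g$ and, when $j\equiv0$, powers of $h$), and those of the form $g^k h^j s u$ with $s\in\l w^2,v\r$, whose squaring condition forces $g^{2k}h^{2j}\cdots$ to be trivial — the precise constraint will come from conjugating $(g,h)$ by $su$ and using $(w^2v,v,w^2)^h=(v,w^2,w^2v)$. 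The key structural fact to extract, paralleling Lemma~\ref{lem:2-nil-reversing}, is that $\l x,y,z\r=G$ forces at least one of $x,y$ to lie in the "$u$-coset" and to involve $gh$ with a coprime exponent; relabelling, take $z=u$ and $x=t(gh)^i u$ with $t\in\l w^2,v\r$ and $\gcd(i,|gh|)=1$, using $[g,h]=1$ so that $gh$ generates $\l g\r\times\l h\r$.

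\textbf{Step 3: the three cases for $y$.} With $z=u$ and $x$ as above, the remaining involution $y$ is either in $\l w^2,v\r$ (giving $y=t'$, and the requirement $\l x,y,z\r=G$ combined with Lemma~\ref{th-w} forces $t'\neq1$ — case (1)), or $y=g^{i'}h^{j'}s'u$ for some $s'\in\l w^2,v\r$; in this case the squaring condition on $y$ constrains $(j', s')$: either $s'$ is such that $su$ centralises $h$ modulo the relation, forcing $3\mid j'$ and $s'=w^2$ up to relabelling (case (2), with $y=g^{i'}h^{3j_0}w^2 u$), or the exponent of $h$ is unrestricted but then $s'\in\{1,v\}$ type, giving case (3) with the side condition $(t,t')\neq(1,1)$ to ensure the triple actually generates $G$ and is genuinely vertex-reversing. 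Throughout, I apply $\calL\cong(\l\tilde u,\tilde v\r\times\l\t_{h^3}\r){:}(\l\tilde h\r{:}\l\mu_r\r)$ from Theorem~\ref{Thm:AutoG} to absorb: the exponent on $g$ (making the $g$-coefficient $=g$, hence the symbol $(gh)^i$), one of the two $\l w^2,v\r$-coordinates into a normal form, and the exponent $i$ on $gh$ down to a representative.

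\textbf{Main obstacle.} The delicate part is Step 3: determining exactly which pairs $(s', j')$ yield an \emph{involution} $y=g^{i'}h^{j'}s'u$ and then which of those, together with $x$ and $z=u$, actually generate $G$ rather than a proper subgroup — this is where the trichotomy comes from, and where one must carefully use the non-commuting action $(w^2v,v,w^2)^h=(v,w^2,w^2v)$ (so that $h$ cyclically permutes the three involutions of $\l w^2,v\r$, an $\S_4$-type action) together with $h^u=h^{-1}$. A secondary subtlety is checking that the listed triples are pairwise inequivalent and complete, i.e.\ that the $\Aut(G)$-orbits have been correctly separated; I expect the side conditions "$t'\neq1$" in (1) and "$(t,t')\neq(1,1)$" in (3) to be exactly the generation constraints, and will verify each by computing $\l x,y,z\r/\l g,h\r$ and comparing with $\l u,v\r$ as in the proof of Lemma~\ref{lem:2-nil-reversing}. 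The enumeration of inequivalent triples, if needed, would follow by dividing the raw count of admissible $(i,i',j',t,t')$ by $|\calL|$, but the statement as given only asks for the classification up to equivalence.
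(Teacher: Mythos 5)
Your plan matches the paper's proof essentially step for step: $c=1$ because every quotient of $G$ must be generated by involutions, then $[g,h]=1$ from Lemma~\ref{lem:set up} makes $G/\l w^2,v\r\cong\l gh\r{:}\l u\r$ dihedral, which forces (after relabelling) $z=u$ and $x=t(gh)^iu$ with $\gcd(i,|gh|)=1$, and the trichotomy for $y$ is exactly the paper's case division. The only slip is your passing claim that involutions in $\l w^2,v\r$ centralise $\l g,h\r$ --- they centralise $g$ but $h$ permutes them via $(w^2v,v,w^2)^h=(v,w^2,w^2v)$, a fact you do invoke correctly later when constraining $t$ by the involution condition $t^u=t^{h^i}$.
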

\begin{proof}
	First, $c=1$, since every factor group of $G$ is generated by involutions.
	By Lemma~\ref{lem:set up}, we have that $[g,h]=1$.
	Thus $\overline{G}:= G / \l w^2, v \r \cong \l g h \r {:} \l u \r$ is a dihedral group.
	
	For each $l \in G$, let $\overline{l}$ denote the image of $l$ in $\overline{G}$.  
	Among the three involutions $\overline{x}$, $\overline{y}$, and $\overline{z}$, two generate the dihedral group $\overline{G}$. Without loss of generality, assume these are $\overline{x}$ and $\overline{z}$.  
	It follows that $z \notin \l w^2, v \r$, and therefore we may assume  
	\[ z = u \]  
	by Sylow's theorem.  
	Write  
	\[ x = t(gh)^i u \]  
	where $t \in \l w^2, v \r$ and $\gcd(i, |gh|) = 1$.  
	Since $x$ is an involution, the element $t$ satisfies  
	\[ t^u = t^{h^i}, \]  
	implying $t \in \{w^2v, 1\}$ if $i \equiv 1 \pmod{3}$, or $t \in \{v, 1\}$ if $i \equiv 2 \pmod{3}$.  
	
	The remaining involution $y$ is either  in $\l w^2,v\r$ or  of the following:  
\[
\begin{aligned}
	&\ g^i w^2 v h^j u = g^i h^j v u && (j \equiv 1 \pmod{3}); \\
	&\ g^i v h^j u = g^i h^j w       && (j \equiv 2 \pmod{3}); \\
	&\ g^i h^{3j_0} w^2 u\text{ or }g^i h^j u.
\end{aligned}
\]	

	 Suppose $y \in \{w^2v, v, w^2\}$.  
	Then $\{w^2v, v, w^2\} = y^{\l xz \r}$, and thus $\l x, y, z \r = \l t(gh)^i u, t', u \r = G$, as in case~(1).  
	
	 Suppose $y = g^{i'} h^{3j_0} w^2 u$.  
	Then $\l y, z \r = (\l w^2 \r \times \l g^{i}, h^{3j_0} \r) {:} \l u \r$, which implies $w^2 \in \l x, y, z \r$.  
	Since $\{w^2v, v, w^2\} = (w^2)^{\l xz \r}$, it follows that $\l x, y, z \r = G$, as in case~(2).  
	
	 For the remaining choices of $y$, direct verification shows $\l x, y, z \r = G$ except when $t = 1$ and $y = (gh)^{i'} u$, as described in case~(3).

%
%
%
%
%
%

\end{proof}

\begin{lemma}\label{lem:Rota-(V)}
	Following notations in Lemma~\ref{lem:set up}.
	Assume that $\calM$ is $G$-vertex-rotary with a rotary pair $(a,z)$.
	Then  $(a,z)$ is equivalent to 
	\[(g t h^j c^k, u)\text{ or }(g t h^j u c^k, u),\] 
	where  $\gcd(j,3)=\gcd(k,|c|)=1$, and $t\in \{v, w^2\}$.
	There are 
	\[\varphi(|h|) \varphi(|c|) / 2 |\t_{h^3}| |\t_{c_0}|\]
	inequivalent rotary pairs of $G$.
	
\end{lemma}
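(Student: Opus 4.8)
The plan is to mimic the structure of the previous lemmas in the section (especially Lemma~\ref{lem:2-nil-2} Case~2 and Lemma~\ref{lem:2-nil-reversing} Case~2): first pin down $z$ up to equivalence, then write $a$ in a normal form relative to the decomposition $G=(\l g\r\times\l w^2,v\r){:}((\l h\r{:}\l u\r)\times\l c\r)$, then use the Hall/automorphism structure from Theorem~\ref{Thm:AutoG}(V) to reduce the number of free parameters, then check which candidate pairs actually generate $G$, and finally count the $\Aut(G)$-orbits. First I would invoke Lemma~\ref{lem:set up} to get $g^u=g^{-1}$ and $[g,h]=1$, which is the structural input making everything tractable. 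By Sylow's theorem $z$ may be taken in $\l u,v\r=\D_8$; since $G/\C_{\l u,v\r}(gh)$ (or the relevant quotient controlling $h$) is not cyclic — more precisely, since $z$ must project to an involution generating the dihedral quotient $\overline G=G/\l w^2,v\r\cong\l gh\r{:}\l u\r$ together with the image of $a$ — we get $z\notin\l w^2,v\r$, so we may take $z=u$ after applying a suitable automorphism (here one uses the $\l\tilde u,\tilde v\r$ part of $\calL$ in Theorem~\ref{Thm:AutoG}(V)).

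Next I would write $a=g^{i} t h^{j} u^{\varepsilon} c^{k}$ with $t\in\l w^2,v\r$, $\varepsilon\in\{0,1\}$, and integers $i,j,k$, and read off the necessary generation conditions: $\l a,z\r=G$ forces $\gcd(i,|g|)=\gcd(j,3)=\gcd(k,|c|)=1$ and forces $\l th^{j}u^{\varepsilon},u\r$ to map onto $\l w^2,v\r{:}\l h,u\r$ in the appropriate quotient. Here Lemma~\ref{th-w} is the key lever: it tells us exactly which $t\in\l w^2,v\r$ give $\l th,u\r=\l w^2,v\r{:}\l h,u\r$, namely $t\in\{v,w^2\}$, which is where the restriction $t\in\{v,w^2\}$ in the statement comes from (the cases $t\in\{1,w^2v\}$ fail because they land in a proper $\D_{4\cdot 3^e}$). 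Applying the centralizing-Hall part $\l\t_{h^3}\r$, $\l\t_{c_0}\r$ and the rest of $\calL$ from Theorem~\ref{Thm:AutoG}(V) normalizes $i\mapsto 1$ (absorbing $\l g\r$), collapses $h^{j}$ up to $\Aut(\l h\r)_{(\l h\r/\l h^3\r)}=\l\t_{h^3}\r$ and $c^{k}$ up to $\l\t_{c_0}\r$, leaving the two families $(gth^{j}c^{k},u)$ and $(gth^{j}uc^{k},u)$ with $t\in\{v,w^2\}$. One still has to verify each of these four shapes really generates $G$ — this is the analogue of the ``Conversely'' paragraphs in the earlier lemmas and is a short direct computation using $[g,h]=1$, $g^u=g^{-1}$, $h^u=h^{-1}$, and the $h$-action on $\l w^2,v\r$; for instance $\l gvh^{j}c^{k},u\r\ni [u,gvh^{j}c^{k}]$ recovers $g^{2}$, $h$, and then $v,w^2$ via Lemma~\ref{th-w}, and $c$ drops out of the remaining cyclic quotient.

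For the count: each rotary pair $(a,z)$ with $\l a,z\r=G$ lies in a free (semiregular) $\Aut(G)$-orbit, so the number of inequivalent rotary pairs is (number of valid pairs)$/|\Aut(G)|$. The number of valid pairs is (number of choices of $z\in G$ conjugate to $u$)$\times$(number of $a$ for fixed $z$), and $|\Aut(G)|$ is read from Theorem~\ref{Thm:AutoG}(V): $|\Aut(G)|=|g|\,\varphi(|g|)\cdot|\l\tilde u,\tilde v\r|\cdot|\l\tilde h\r|\,|\l\mu_r\r|\cdot|\t_{h^3}|\cdot|\t_{c_0}|$ in the $[h,g]=1$ case. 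Bookkeeping the powers of $3$ from $\l h\r$, the factors of $2$ from $\D_8$ and the $\mu_r$-term, and the $\varphi$-counts for $\l g\r$, one arrives at $\varphi(|h|)\varphi(|c|)/2|\t_{h^3}||\t_{c_0}|$; the lone factor $1/2$ (versus the $2\varphi(|h|)/|\t_{h_0}|$ seen in types (II)--(III)) reflects that the two families $t=v$ and $t=w^2$ are fused by the $\l\mu_r\r$ (order-$2$ outer) part of $\calL$ acting on $\l h\r{:}\l u\r\cong\ZZ_2^2{:}\S_3$, together with the extra automorphism pairing the $u$ and $uc^k$ choices appropriately — precisely the subtle point. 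I expect the main obstacle to be exactly this orbit-counting step: correctly identifying which of the superficially distinct pairs $(gvh^jc^k,u)$, $(gw^2h^jc^k,u)$, $(gvh^juc^k,u)$, $(gw^2h^juc^k,u)$ are $\Aut(G)$-equivalent, since $\calL$ in type~(V) has the nontrivial $\l\mu_r\r$-action on the $\S_3$-part that is absent in the $2$-nilpotent cases, and getting the factor of $2$ right depends on tracking that action carefully rather than on any hard structural fact.
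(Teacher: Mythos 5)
Your proposal follows essentially the same route as the paper's proof: Sylow's theorem to place $z$ in $\l u,v\r$, a quotient-by-the-normal-subgroup argument to rule out $z\in\l w^2,v\r$ and fix $z=u$, a normal form $a=g^ith^ju^{\varepsilon}c^k$ whose generation conditions force the gcd constraints and (via Lemma~\ref{th-w}) $t\in\{v,w^2\}$, and a converse computation using $[g,h]=1$, $g^u=g^{-1}$ and the coprimality of $|g|,|c|$ to $6$ to recover $g$ and $c$ from powers and commutators of the pair. The orbit count, which you flag as the delicate point, is in fact only asserted in the paper (its proof stops after the generation check), so your semiregularity-plus-$|\Aut(G)|$ bookkeeping is consistent with, and no less complete than, what the paper provides.
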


\begin{proof}
	By Sylow's theorem, we may assume that the involution $z\in \l u,v\r$.
	Suppose that $z\in \l w^2,v\r$.
	Then $G/\l u,v\r\cong \l g\r{:}((\l h\r{:}\l u\r)\times\l c\r)$ is isomorphic to a subgroup of $\l a\r$.
	This is not possible since $\l g\r{:}((\l h\r{:}\l u\r)\times\l c\r)$ is non-abelian.
	Hence $z\in\l u,v\r\setminus\l w^2,v\r=\{u,w^2u\}$, and we may further let
	\[z=u,\]
	as $(w^2u)^{v}=w$.
	
	Since $G=(\l g\r\times\l w^2,v\r){:}((\l h\r{:}\l u\r)\times\l c\r)$, the element $a$ has two possibilities:
	\[\mbox{either $a=g^i t h^j u c^k$, or $a=g^i t h^j c^k$,}\]
	where $i$, $j$, $k$ are integers and $t\in \l w^2,v\r$.
	In both cases, one always have that 
	\[G=\l a,z\r\leqslant \l g^i, t, h^j, w, c^k\r,\]
	which implies that $\gcd(i,|g|)=\gcd(j,3)=\gcd(k,|c|)=1$, and $t\not=1$.
	Again in either cases, one have that $\l w^2,v\r{:}\l h,u\r\times \l c\r \cong\l a,z\r/\l g\r\cong \l th^jc^k,u\r$. Hence $t=v$ or $w^2$ by Lemma~\ref{th-w}.
	
	Conversely, we show explicitly that the above pairs generate $G$.
	Without loss of generality, let $z=u$ and $a=gthu^{\ell}c$, where $t\in \{v,w^2\}$ and $\ell\in \{0,1\}$.
	Then $\l a,z\r=\l gthc, u\r$, and we let $a'=gthc$.
	Then
	\[{a'}^2=g g^{c^{-1}} t t^{h^{-1}} h^2 c^2\text{ and }z^{a'}z= (g^2)^{c h^{-1}} (t^u t)^{h^{-1}} h^2.
	\] 
	Since $\gcd( |g|, 6 )=1 $ and $[g,\l w^2,v\r{:}\l h\r]=1$, we have that 
	\[g\in \l z^{a'}z\r.\]
	So $\l t t^{h^{-1}} h^2 c^2\r\leqslant \l g,{a'}^2\r$.
	As $\gcd(|c|, 6)=1$ and $[c, \l w^2,v\r{:}\l h\r ] = 1 $, 
	\[ c\in \l g,{a'}^2\r .\]
	It follows from Lemma~\ref{th-w} that $\l a,z\r=\l g, c, th, u\r=G$ as $t\in \{w^2,v\}$.
	
\end{proof}

\section{Proof of Theorem~\ref{thm:main}}

Let $\calM=(V,E,F)$ be a $G$-arc-transitive map with $G\le\Aut(\calM)$ and underlying graph $\Ga=(V,E)$.
Assume that $\gcd(\chi(\calM),|E|)=1$.
Then each Sylow subgroup of $G$ is cyclic or dihedral by Lemma~\ref{(chi,|A|)=1}, and then $G$ is divided into five types in Proposition~\ref{lem:G}.
The groups of type~(I) are treated in Lemma~\ref{lem:G-dih}-\ref{lem:G-metac}, which show that $\calM$ and $G$ satisfy part~(1) of Theorem~\ref{thm:main}.
Then the groups of types~(II)-(III) are dealt with by Lemmas~\ref{lem:2-nil-2} and \ref{lem:2-nil-reversing}, which show that $\calM$ and $G$ satisfy parts~(2)-(3) of Theorem~\ref{thm:main}.
Finally, the groups of types~(IV)-(V) are treated in Lemmas~\ref{lem:not-2-nil-1}, \ref{lem:Rota-(V)} and \ref{lem:Rev-(V)}.
This completes the proof of Theorem~\ref{thm:main}.
\qed

\vskip0.1in

\end{document}